\documentclass[12pt]{article}

\usepackage{amsmath}
\usepackage{amsthm}
\usepackage{amssymb}
\usepackage{color,epsfig}
\usepackage{fullpage}
\usepackage{enumerate}
\usepackage{paralist}
\usepackage{hyperref}
\usepackage{relsize}
\usepackage{exscale}

\usepackage{tikz}
\usetikzlibrary{patterns}
\usepackage{subfig}

%
\usepackage{changes}
\usepackage{changes}
\normalem
\definechangesauthor[color=red]{mk}
\definechangesauthor[color=blue]{dm}
%

\usepackage{ifpdf}
\def\ifpdf\input{#.pdf_t}\else\input{#.pstex_t}\fi1{\ifpdf\input{#1.pdf_t}\else\input{#1.pstex_t}\fi}

\newtheorem{theorem}{Theorem}
\newtheorem{proposition}[theorem]{Proposition}

\newtheorem{corollary}[theorem]{Corollary}
\newtheorem{lemma}[theorem]{Lemma}
\newtheorem{remark}[theorem]{Remark}
 
\newcommand{\dist}{\mathrm{d_h}}
\newcommand{\distp}{\mathrm{d'_h}}
\newcommand{\distpp}{\mathrm{d''_h}}

\newcommand{\calB}{\mathcal{B}}
\newcommand{\calC}{\mathcal{C}}

\newcommand{\calG}{\mathcal{G}}
\newcommand{\calH}{\mathcal{H}}

\newcommand{\calR}{\mathcal{R}}
\newcommand{\calS}{\mathcal{S}}

\newcommand{\calW}{\mathcal{W}}

\newcommand{\EE}{\mathbb{E}}
\newcommand{\HH}{\mathbb{H}}
\newcommand{\NN}{\mathbb{N}}
\newcommand{\PP}{\mathbb{P}}
\newcommand{\RR}{\mathbb{R}}

\newcommand{\ZZ}{\mathbb{Z}}

\renewcommand{\Pr}{\mathbf{P}}
\newcommand{\Ex}{\mathbf{E}}

\newcommand{\poimod}{\mathrm{Poi}}

\begin{document}

\title{On the second largest component of random hyperbolic graphs}

\author{Marcos Kiwi\thanks{Depto.~Ing.~Matem\'{a}tica \&
  Ctr.~Modelamiento Matem\'atico (CNRS UMI 2807), U.~Chile. 
  Beauchef 851, Santiago, Chile, Email: \texttt{mkiwi@dim.uchile.cl}. Gratefully acknowledges the support of 
  Millennium Nucleus Information and Coordination in Networks ICM/FIC P10-024F
  and CONICYT via Basal in Applied Mathematics.} \\
\and 
Dieter Mitsche\thanks{Institut Camille Jordan, Universit\'e Jean Monnet, Email: \texttt{dieter.mitsche@univ-st-etienne.fr}. The second author has been supported by IDEXLYON of Universit\'{e} de Lyon (Programme Investissements d'Avenir ANR16-IDEX-0005).}}

\maketitle 

\begin{abstract}
  We show that in the random hyperbolic graph model as formalized by~\cite{GPP12} in the most interesting range of $\frac12 < \alpha < 1$ the size of the second largest component is $\Theta((\log n)^{1/(1-\alpha)})$. Our research is motivated by the
question raised in [BFM13] regarding the uniqueness of linear size
components in random hyperbolic graphs which naturally leads to the
question regarding the size of the second largest component.
We also show that for $\alpha=\frac12$ with constant probability the corresponding size is $\Theta(\log n)$, whereas for $\alpha=1$ it is $\Omega(n^{b})$ for some $b > 0$.
\end{abstract}

\section{Introduction}\label{sec:intro}
The model of random hyperbolic graphs introduced by Krioukov et al.~\cite{KPKVB10} has attracted quite a bit of interest due to its key properties also observed in large real-world networks. One convincing demonstration of this fact was given by Bogu\~n\'{a} et al.~in~\cite{BPK10}
  where a compelling (heuristic)
  maximum likelihood fit of autonomous systems of the internet graph in hyperbolic 
  space was computed. A second reason for why the model  
  initially caught attention is due to the 
  experimental results reported by Krioukov et al.~\cite[\S~X]{KPKVB10} confirming that the model exhibits
  the algorithmic small-world phenomenon established
  by the groundbreaking letter forwarding experiment of Milgram from 
  the 60's~\cite{Mil67}.

\medskip
Another important aspect of the random graph model introduced 
  in~\cite{KPKVB10} is its mathematically elegant specification and the 
  fact that it is amenable to mathematical analysis.
This partly explains why the model has been studied not only empirically 
  by the networking community but also analytically by theoreticians.
For the latter, it is natural to first consider those issues that
  played a crucial role in the development of the theory of 
  other random graph models.
Among these, the Erd\H{o}s-R\'{e}nyi random graph model is undisputedly 
  the most relevant. 
One of the most, if not the most, studied aspect of the Erd\H{o}s-R\'{e}nyi model
  is the evolution (as a function of the graph density) 
  of the size and number of its connected components~\cite{ER60}, specially the 
  size of the largest one, but also the size of the second largest.
These studies have played a crucial role in the development of 
  mathematical techniques and significantly contributed to the understanding
  of the Erd\H{o}s-R\'{e}nyi random graph model.
For the random hyperbolic graph model, the study of the largest 
  component's size was started by Bode, Fountoulakis and M\"{u}ller~\cite{BFM13}
  and recently refined by Fountoulakis and M\"{u}ller~\cite{FMLaw}.
A logarithmic lower bound and polylogarithmic upper bound 
  for the size of the second largest component of random
  hyperbolic graphs (when $\frac12<\alpha<1$) were first established 
  in~\cite{KM18}.
In this paper we improve on these bounds and determine the 
  precise order of the size 
  of the second largest component of random hyperbolic graphs.

\bigskip\noindent
\textbf{Model specification:}
In the original model of Krioukov et al.~\cite{KPKVB10} an $n$-vertex size graph $G$ was obtained by first randomly choosing $n$ points in $B_{O}(R)$ (the disk of radius $R=R(n)$ centered at the origin $O$ of the hyperbolic plane).
From a probabilistic point of view it is arguably more natural to consider the Poissonized version of this model. Formally, the Poissonized model 
is the following (see also~\cite{GPP12} for the same description in the uniform model): for each $n \in \NN$, consider a Poisson point process on the hyperbolic disk of radius $R :=2 \log (n/\nu)$ for some positive constant $\nu \in \RR^+$ ($\log$ denotes here and throughout the paper the natural logarithm) and denote its point set by $V$ (the choice of $V$ is due to the fact that we will identify points of the Poisson process with vertices of the graph). The intensity function at polar coordinates $(r,\theta)$ for 
  $0\leq r< R$ and $0 \leq \theta < 2\pi$ is equal to
\[
g(r,\theta) := \nu e^{\frac{R}{2}}f(r,\theta),
\]
where $f(r,\theta)$ is the joint density function with $\theta$ chosen uniformly at random in the interval $[0,2\pi)$ and independently of $r$, which is chosen according to the density function
\begin{align*}
f(r) & := \begin{cases}\displaystyle
   \frac{\alpha\sinh(\alpha r)}{\cosh(\alpha R)-1}, &\text{if $0\leq r< R$}, \\
   0, & \text{otherwise}.
  \end{cases}
\end{align*}
Note that this choice of $f(r)$ corresponds to the uniform distribution inside a disk of radius $R$ around the origin in a hyperbolic plane of curvature $-\alpha^2$. Identify then the points of the Poisson process with vertices
(that is, identify a point with polar coordinates $(r_v,\theta_v)$ with vertex $v\in V$) and make the following graph $G=(V,E)$: for $u, u'\in V$, $u \neq u'$, there is an edge with endpoints 
  $u$ and $u'$ provided the distance (in the hyperbolic plane) between
  $u$ and $u'$ is at most $R$, i.e.,  
  the hyperbolic distance
  between $u$ and $u'$, denoted by 
  $\dist:=\dist(u,u')$,
  is such that  $\dist\leq R$ where $\dist$ is obtained by solving 
\begin{equation}\label{eqn:coshLaw}
\cosh \dist := \cosh r_u\cosh r_{u'}-
  \sinh r_u\sinh r_{u'}\cos( \theta_u{-}\theta_{u'}).
\end{equation}

For a given $n \in \NN$, we denote this model by 
  $\poimod_{\alpha,\nu}(n)$.
Note in particular that 
\[
\iint g(r,\theta) d\theta dr 
  = \nu e^{\frac{R}{2}}=n,
\]
and thus  $\EE|{V}|=n.$ 
  The main advantage of defining $V$ as a Poisson point process is
  motivated by the following two properties: the number of points of
  $V$ that lie in any region $A \subseteq B_O(R)$ follows a Poisson
  distribution with mean given by $\int_A g(r,\theta) drd\theta=n
  \mu(A)$, and the numbers of points of $V$ in disjoint
  regions of the hyperbolic plane are independently distributed.

The restriction $\alpha>\frac12$ and the role of $R$, informally speaking,
  guarantee that the resulting graph has bounded average degree (depending
  on $\alpha$ and $\nu$ only): if $\alpha<\frac12$, then the degree sequence is so 
  heavy tailed that this is impossible (the graph is with high probability connected in this case, as shown in~\cite{BFM13b}), and if $\alpha>1$, then
  as the number of vertices grows,
  the largest component of a random hyperbolic graph has sublinear 
  order~\cite[Theorem~1.4]{BFM15}.
In fact, although some of our results hold for a wider range of $\alpha$, 
  we will always assume $\frac12<\alpha<1$; only in the concluding remarks we discuss the cases $\alpha=\frac12$ and $\alpha=1$.

It is known that for $\frac12 < \alpha < 1$, with high probability the 
  graph $G$ has a linear size 
  component~\cite[Theorem~1.4]{BFM15}  
  and all other components are of
  polylogarithmic order~\cite[Corollary 13]{km15}, 
  which justifies referring to the
  linear size component as \emph{the giant component}.
Implicit in the proof of~\cite[Theorem 1.4]{BFM15} is that the giant component of a
  random hyperbolic graph $G$ is the one
  that contains all vertices whose radial coordinates are at most $\frac{R}{2}$. More
  precise results including a law of large numbers for the largest
  component in these networks were established recently
  in~\cite{FMLaw}. 

\bigskip\noindent
\textbf{Main result and proof overview:}
in this paper we determine the exact order of 
  the size of the second largest component, which we denote by $L_2(G)$. 

\bigskip
We say that an event holds \emph{asymptotically almost surely (a.a.s.)}, if it holds with probability tending to $1$ as $n \to \infty.$ We use the standard Bachmann-Landau notation for the asymptotic behaviour of sequences: For two sequences $(a_n)_{n\ge 0}$ and $(b_n)_{n\ge 0}$,  we write $a_n = O(b_n)$ to denote the existence of a constant $C > 0$ and a non-negative integer $n_0$ such that $|a_n| \le C|b_n|$ for all $n\geq n_0$. Moreover, we write $a_n = \Omega(b_n)$ if
$b_n = O(a_n)$, and $a_n = \Theta(b_n)$ if both $a_n = O(b_n)$ and $a_n =\Omega(b_n)$. Finally, $a_n=o(b_n)$ if for every constant $C>0$ there is a non-negative integer $n_0$ such that $|a_n| \le C |b_n|$ for all $n\geq n_0$, and moreover $a_n=\omega(b_n)$ if $b_n=o(a_n)$.
The main result of this paper is the following:
\begin{theorem}\label{thm:main}
  Let $\frac12 < \alpha < 1$.
  If $G=(V,E)$ is chosen according to $\poimod_{\alpha,\nu}(n)$, then a.a.s.,
\[
L_2(G)=\Theta(\log^{\frac{1}{1-\alpha}} n).
\]
Moreover, for some sufficiently small constant $b>0$, there are 
  $\Omega(n^{b})$ components in $G$, each one of size 
  $\Theta(\log^{\frac{1}{1-\alpha}} n)$.
\end{theorem}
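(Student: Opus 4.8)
\medskip\noindent\textbf{Proof plan.}
The plan is to prove the matching upper and lower bounds separately, both organised around one principle. Any component $C$ of $G$ that is not the giant one lies entirely in the outer band $\{u\in V:r_u>R/2\}$: the set $\{u:r_u\le R/2\}$ is a clique (two such points are at hyperbolic distance at most $r_u+r_{u'}\le R$) and, as noted above from~\cite{BFM15}, is contained in the giant; so $C$ is disjoint from it. Such a $C$ is ``protected'' by a region of $B_O(R)$ that contains no point of $V$ at all --- every point adjacent to a vertex of $C$ is either in $C$ or absent. Writing $\mu$ for the probability measure of density $f$, a region of measure $m$ is empty with probability $e^{-nm}$, so the critical scale is $nm\asymp\log n$: below it a union bound over the $\mathrm{poly}(n)$ possible coarse shapes of $C$ cannot rule $C$ out, above it it can. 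The theorem amounts to the statement that an isolated connected $C$ near the boundary with $|C|=s$ forces a protective region of $\mu$-measure $\Theta(s^{1-\alpha}/n)$; equating $s^{1-\alpha}\asymp\log n$ gives $s=\Theta(\log^{1/(1-\alpha)}n)$.

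For the \textbf{upper bound} I would first record the standard geometry implicit in~\eqref{eqn:coshLaw}: two vertices at radial coordinates $r,r'$ are adjacent iff their angular distance is at most $2e^{(R-r-r')/2}(1+o(1))$ when $r+r'\ge R$, and always adjacent when $r+r'\le R$; also $\mu(\{r\ge R-t\})\asymp 1-e^{-\alpha t}$ and $\mu(\{r\le R-t\})\asymp e^{-\alpha t}$. Let $C$ be non-giant, $s=|C|$, with innermost radial coordinate $R-h$ and angular span $W$. Two empty regions are available. (i) The neighbourhood of the deepest vertex of $C$, restricted to radial coordinates below $R-h$, is disjoint from $C$, hence empty; a direct computation gives it $\mu$-measure $\Theta(e^{(1-\alpha)h}/n)$, and a union bound over the $O(n\log n)$ positions of that vertex already forces $h\le\tfrac{1}{1-\alpha}\log\log n\,(1+o(1))$ a.a.s.: every non-giant component hugs the boundary. (ii) Since $C$ is connected with all radial coordinates $\ge R-h$, consecutive vertices of $C$ in angular order lie within $\Theta(e^{h}/n)$ of one another (otherwise the path of $C$ bridging the gap would need an edge longer than any the geometry permits), so in the bulk of its span $C$ is $\Theta(e^{h}/n)$-dense and the region lying ``under'' $C$ --- at radial coordinates where a point would necessarily be adjacent to a nearby vertex of $C$ --- is empty, with $\mu$-measure increasing in $W$; combined with the Chernoff bound $s\lesssim nW(1-e^{-\alpha h})$ this yields a second lower bound on the empty measure in terms of $s$ and $h$. \emph{The main obstacle is that naively combining (i) and (ii) yields only a bound of the form $O(\log^{(1+\alpha)/(1-\alpha)}n)$}: to reach the sharp exponent one must argue that the ``under'' region may be taken at radial depth $R-h$ rather than $R-2h$, which forces one to control the angular gaps of $C$ --- when $C$ has a wide gap, the short subpath of $C$ bridging it consists of comparatively deep vertices, whose own downward spills must then be charged as empty. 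Carrying out this case analysis over all coarse shapes (angular span discretised to $O(n)$ cells, radial depth to $O(\log n)$ levels, and boundedly many further profile parameters) and applying the union bound is the technical heart of the argument and should give $L_2(G)=O(\log^{1/(1-\alpha)}n)$ a.a.s.

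For the \textbf{lower bound} I would exhibit an explicit \emph{gadget}. Fix $h_{\max}=\tfrac{1}{1-\alpha}\log\log n+O(1)$ and an angular window of width $w=\Theta(e^{h_{\max}}/n)=\Theta(\log^{1/(1-\alpha)}n/n)$; the gadget is the set of points of $V$ in the window with radial coordinate above $R-h(\theta)$, where $h(\theta)\equiv h_{\max}$ on the window except on two edge strips of angular width $\delta=\Theta(\log n/n)$, where it tapers to a constant. One checks: (a) with probability $\Omega(1)$ the gadget has between $c_1$ and $c_2$ times $\log^{1/(1-\alpha)}n$ vertices and (choosing the width constant suitably, and if necessary passing to a connected sub-configuration to accommodate small $\nu$) they form a connected subgraph; (b) the region that must be empty for the gadget to be its own component --- its underside $\{R/2<r<R-h(\theta)\}$ inside the window, the thin edge slivers, and the downward spill of the deep middle vertices --- has total $\mu$-measure $C(\alpha,\nu)\,s^{1-\alpha}/n+\Theta(\delta)=\Theta(\log n/n)$, with a prefactor tunable below any fixed positive value, so it is empty with probability $n^{-c}$ for a constant $c\in(0,1)$; and (c) for a fixed window, a.a.s.\ no edge joins the gadget to a vertex of radial coordinate $\le R/2$, because a near-boundary vertex has angular reach $n^{-\Omega(1)}$ even toward the origin. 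Hence a fixed window carries a genuine component of size $\Theta(\log^{1/(1-\alpha)}n)$ with probability $n^{-c}$. Tiling the circle into $N=\Theta(n/\log^{1/(1-\alpha)}n)$ disjoint windows-with-buffers, the corresponding events depend on disjoint regions of the Poisson process and are therefore independent, so the number of realised gadgets stochastically dominates $\mathrm{Bin}(N,n^{-c})$, of mean $n^{1-c-o(1)}\to\infty$; a Chernoff bound produces $\Omega(n^{b})$ of them a.a.s.\ for any $b<1-c$. Each realised gadget is a component of size $\ge c_1\log^{1/(1-\alpha)}n$, and --- being non-giant --- of size $\le C\log^{1/(1-\alpha)}n$ by the upper bound, hence $\Theta(\log^{1/(1-\alpha)}n)$; in particular $L_2(G)=\Omega(\log^{1/(1-\alpha)}n)$, which together with the upper bound completes the proof.
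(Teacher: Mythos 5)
Your overall philosophy (non-giant components hug the boundary, and isolating a boundary component of size $s$ costs an empty region of measure $\asymp s^{1-\alpha}/n$) is the right one, and your step (i) is essentially Lemma~\ref{lem:HtoG}. But both halves of the proposal have genuine gaps. For the upper bound you concede the decisive point yourself: your combination of (i) and (ii) only yields $O(\log^{(1+\alpha)/(1-\alpha)}n)$, and the promised ``case analysis over all coarse shapes'' that is supposed to recover the sharp exponent is exactly the part you do not carry out. The missing ingredient is a single geometric fact (Lemma~\ref{lem:FK15}): if a connected component $C$ lies entirely at radius $>R-h$ and its angular span contains the angle of a vertex $u$ with $r_u\le R-h$, then some edge of $C$ straddles $\theta_u$ and both endpoints are adjacent to $u$; hence the \emph{entire} region $\{r\le R-h\}$ under the span of $C$ must be vertex-free, not merely the union of direct adjacency neighborhoods of vertices of $C$. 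With that, either your union-bound scheme closes immediately (forced-empty measure $\asymp W e^{-\alpha h}$, so $W=O(\log^{1/(1-\alpha)}n/n)$ and $s=O(nW)$), or, as in the paper, one shows that a.a.s.\ every sector of angle $\Theta(\log^{1/(1-\alpha)}n/n)$ contains a giant vertex of radius $\le\ell$ (Lemma~\ref{lem:maxAngle}) and no non-giant component can span past it. Without this blocking argument your upper bound is not established.

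For the lower bound the gadget as specified does not work, for a quantitative reason. A gadget vertex at depth $h_{\max}$ sitting just inside the flat part of the window (distance $\approx\delta$ from the edge) is adjacent to points of constant depth out to angular distance $\Theta(e^{h_{\max}/2}/n)=\Theta((\log n)^{1/(2(1-\alpha))}/n)$, which exceeds your taper width $\delta=\Theta(\log n/n)$ for every $\alpha>\frac12$; the resulting forced-empty sliver outside the window therefore has measure $\Theta((\log n)^{1/(2(1-\alpha))}/n)\gg\log n/n$, the per-window success probability becomes $n^{-\omega(1)}$, and the counting collapses. Repairing this requires a depth-graded boundary (buffer width matched at every depth to the local adjacency reach), which is precisely the funnel-and-wall construction $\Upsilon,\Xi\setminus\Upsilon$ of Lemma~\ref{ev:C}; a flat window with an $O(\log n/n)$ taper cannot have total wall measure $\epsilon\log n/n$. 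Second, your independence claim for the tiling into $N=\Theta(n/\log^{1/(1-\alpha)}n)$ windows is false as stated: the ``downward spill'' of a window reaches radii just above $R/2$, where its angular extent is $n^{-1/2+o(1)}$, vastly larger than the window spacing, so the per-window events do not depend on disjoint regions. The paper resolves this by truncating the independent per-sector events at radius $(1-\frac{\beta}{2})R$ with sector width $\psi=(\nu/n)^{1-\beta}$ (Lemmata~\ref{ev:C} and~\ref{ev:H}, whose exposed areas stay inside the sector) and handling the inner disk by the separate a.a.s.\ event $\calS$ of Lemma~\ref{ev:P} for a randomly chosen sector, with only a small union bound for the many-components claim; note that the inner-disk requirement fails per window with probability that is only polynomially small with a tiny exponent, so it can neither be folded into the $n^{-c}$ per-window probability nor union-bounded over all windows. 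Your claim (c), restricted to $r\le R/2$ and justified only by ``angular reach $n^{-\Omega(1)}$'', glosses over exactly this balancing act. Finally, claim (a) (connectivity of $\Theta(\log^{1/(1-\alpha)}n)$ gadget vertices) needs an argument of the type of Lemmata~\ref{ev:B} and~\ref{ev:G} (a backbone at depth $h_{\max}$ plus a second-moment bound on descending paths), since typical near-boundary vertices are not adjacent to one another; you gesture at this but it is not a detail one gets for free.
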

To establish the lower bound, we partition the disk into sectors, so that close to the central axis of each sector, one can find a chain (a path) of vertices within a certain distance from the boundary so that the expected number of vertices with larger radius and in the same sector is of the desired order. While it is relatively easy to show that a constant fraction of these large radii vertices indeed connects to the chain (hence, belong to the same connected component), it is more work to show that none of these vertices in fact is connected to the giant component. Technically, this is tedious since vertices at all radii might potentially be connected to the giant component; vertices with smaller radii might be more dangerous to have neighbors with smaller radii, whilst vertices with larger radii (close to the boundary of $B_{O}(R)$)  might be more dangerous to being reachable from vertices of larger radii that connect to the giant component.

\medskip

An original aspect of our lower bound 
  analysis consists in identifying ``walls'', that is, 
  regions $\calW$, inside $B_{O}(R)$ and close to 
  its boundary (specifically, a 
  collection of connected points at 
  distance at least $\ell:=R-O(\log R)$ 
  from the origin) which satisfy the following conflicting
  properties: 
  (i) they do not contain vertices, and
  (ii) for a sector  
  $\Phi$ of $B_{O}(R)$ strictly containing $\calW$,
  the region $\Phi\setminus B_{O}(\ell)$ 
  is partitioned into connected regions
  $\calW',\calW,\calW''$
  in such a way that the hyperbolic distance between a point in 
  $\calW'$ and a point in $\calW''$ is greater than $R$. 
The abundance of walls coupled
  with the fact that the subgraph of $G$ induced
  by the vertices in $B_{O}(R)\setminus B_{O}(\ell)$ contains
  many vertices (belonging to connected components which we refer to as \emph{pre-components})
  reduces the problem of bounding $L_2(G)$ from below to one of showing that there are sectors of $B_O(R)$, say $\Phi$,
  for which $\Phi\cap B_{O}(R)\setminus B_{O}(\ell)$ contains 
  a relatively large connected component while $\Phi\cap B_{O}(\ell)$ is
  unlikely to contain vertices of $G$ 
  (these latter regions are the
  ones where neighbors of pre-components can potentially lie).

Interestingly, the mentioned abundance of walls also partly explains 
the hierarchical structure close to the boundary of 
  $B_{O}(R)$ that random hyperbolic graphs exhibit 
  (see Figure~\ref{fig:forest}). 
\begin{figure}[ht]
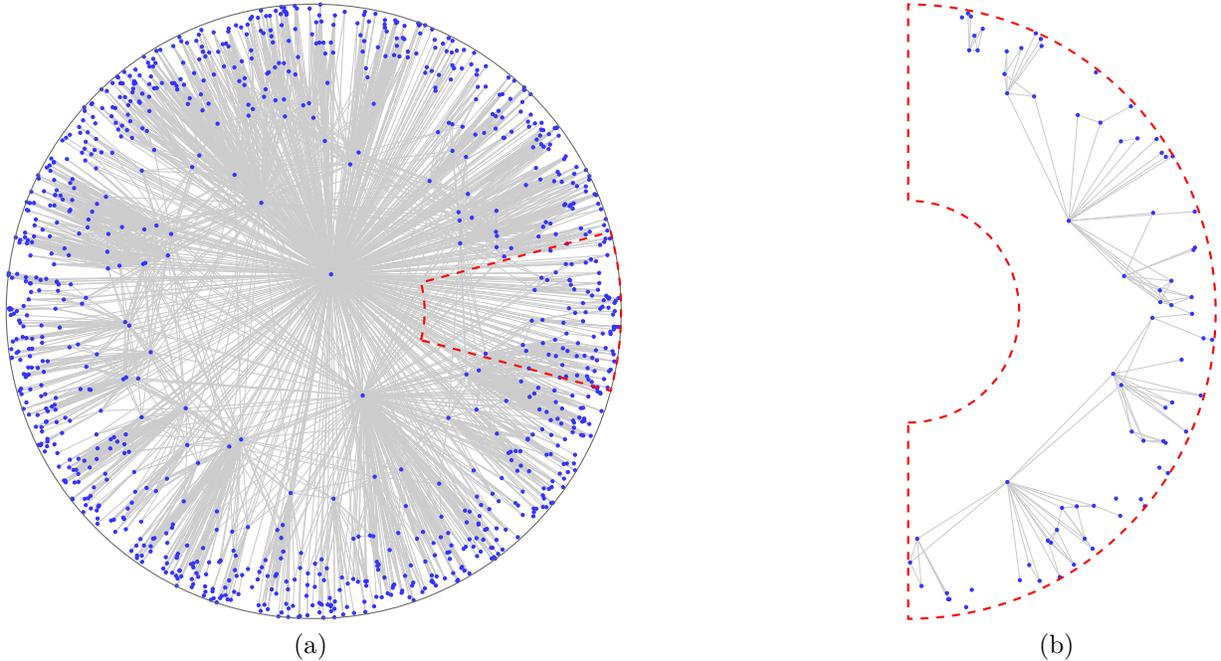

\def\vertRad{1pt}
\subfloat[][]{\centering\input{fig1.inc}}
\hfill
\subfloat[][]{\centering\input{fig2.inc}}
\caption{(Left) An instance $G$ of Krioukov et.~al.'s 
  random hyperbolic graph model with parameters $n=1000$, $\alpha=0.7$,
  and $\nu=1.1$.
  (Right) The subgraph of $G$ induced by the vertices inside the dashed region shown on the left side, where angular coordinates have been scaled by a factor of $6$ in order to better elicit the hierarchical structure of the induced graph.}\label{fig:forest}
\end{figure}

\medskip 
  The upper bound of Theorem~\ref{thm:main}, easier than the lower bound, makes use of the fact that all vertices that are not too close to the boundary of $B_O(R)$ belong to the giant component. We can thus find in every sector of not too big angle a vertex belonging to the giant component, and by simple known geometric properties of random hyperbolic graphs any other component must be squeezed between two such sectors. Since the number of vertices in such a sector is concentrated, we get an upper bound on the size of the second component.

\medskip
To conclude our study of the size of the second largest component of 
  random hyperbolic graphs we consider the relevant remaining cases where 
  $\alpha=\frac12$ or $\alpha=1$.
In the former case, we show that a.a.s.~every vertex of the 
  second largest component must be within
  $C=\Theta(1)$ of the boundary of $B_{O}(R)$.
Moreover, by some geometric considerations, such a component must 
  be contained in a sector $\Phi$ of $B_{O}(R)$ 
  for which $\Phi\cap B_{O}(R-C)$ does
  not contain vertices of $G$.
An analysis of the likely maximum angle such a sector $\Phi$ can have 
  and of the number of vertices that can be found in 
  $\Phi \setminus B_{O}(R-C)$ yields the following:
\begin{proposition}\label{p:alphaHalf}
For $\alpha=\frac12$ and $\nu$ small enough, with constant probability, $L_2(G)=\Theta(\log n)$.
\end{proposition}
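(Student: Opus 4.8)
Write $R=2\log(n/\nu)$. For $\alpha=\tfrac12$ the radial density simplifies so that $\mu(B_{O}(s))=\frac{\cosh(s/2)-1}{\cosh(R/2)-1}$, giving $\EE|V\cap B_{O}(s)|=\Theta(\nu e^{s/2})$ for $s=\omega(1)$ and, crucially, $\mu\big(B_{O}(R)\setminus B_{O}(R-C)\big)=\Theta(1)$ for every constant $C>0$; so a positive fraction of the $n$ vertices lie within constant distance of the boundary. Two facts are used throughout: \emph{(i)} vertices of radius $\le R/2$ induce a clique (the \emph{core}), a.a.s.\ non-empty since $\EE|V\cap B_{O}(R/2)|=\Theta(\sqrt{\nu n})$; the component of $G$ containing it will turn out to have $\Theta(n)$ vertices, hence is the largest one, the \emph{giant}. \emph{(ii)} If $r_u+r_{u'}\ge R$ then $u\sim u'$ iff $|\theta_u-\theta_{u'}|\le (2+o(1))e^{(R-r_u-r_{u'})/2}$, while any $u,u'$ at angular distance $\le 2e^{-R/2}=2\nu/n$ are adjacent. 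The plan is to prove $L_2(G)=O(\log n)$ and $L_2(G)=\Omega(\log n)$, a.a.s.

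\textbf{Upper bound.} Four steps. \textbf{(U1)} There is a constant $C$ such that a.a.s.\ every vertex of radius $\le R-C$ belongs to the giant; consequently the second component $X$ satisfies $X\subseteq B_{O}(R)\setminus B_{O}(R-C)$ and spans an angular arc $I$ strictly shorter than the full circle. \textbf{(U2)} A.a.s.\ the sector $\Phi:=\{\text{points with angle in }I\}$ has $\Phi\cap B_{O}(R-2C)=\emptyset$: since both endpoints of any edge of $X$ have radius $>R-C$, such an edge has angular length $<2e^{C}\nu/n$; hence a vertex $w$ of radius $\le R-2C$ with $\theta_w$ interior to $I$ would be adjacent to some vertex $p$ of $X$ on a path of $X$ joining vertices on either side of $\theta_w$ (the adjacency threshold between radii $\le R-2C$ and $>R-C$ is at least $2e^{(2C-R)/2}=2e^{C}\nu/n$), contradicting $p\in X$, $w\in$ giant. \textbf{(U3)} The angular coordinates of $V\cap B_{O}(R-2C)$ are $\Theta(ne^{-C})$ i.i.d.\ uniform points, whose largest gap is a.a.s.\ $O(e^{C}\log n/n)$; hence $|I|=O(e^{C}\log n/n)$. \textbf{(U4)} By Poisson concentration the sector of width $|I|$ contains $O(n|I|)=O(e^{C}\log n)=O(\log n)$ vertices (here $C$ is a fixed constant), so $|X|=O(\log n)$.

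\textbf{Lower bound.} Here I adapt the \emph{walls}/\emph{pre-components}/\emph{chain} construction from the proof of Theorem~\ref{thm:main}, but at the radial scale $\ell:=R-\Theta(1)$ rather than $R-\Theta(\log R)$, exploiting that $B_{O}(R)\setminus B_{O}(\ell)$ already contains $\Theta(n)$ vertices. Partition the circle into $\Theta(n/\log n)$ sectors of angular width $w=\Theta(\log n/n)$. A.a.s.\ a positive proportion of these sectors $\Phi$ satisfy simultaneously: \emph{(a)} $\Phi\cap B_{O}(\ell)=\emptyset$ (each with probability $n^{-\Theta(1)}$, the exponent chosen $<1$); \emph{(b)} near the central axis of $\Phi$ there is a path (\emph{chain}) of vertices at radius $\approx\rho$, for a carefully chosen $\rho\in(\ell,R)$, spanning the whole angular width of $\Phi$; \emph{(c)} a constant fraction of the $\Theta(\log n)$ vertices of $\Phi\setminus B_{O}(\ell)$ attach to this chain; \emph{(d)} on each side of $\Phi$ there is a wall, i.e.\ an empty connected region of $B_{O}(R)\setminus B_{O}(\ell)$ of angular width exceeding $2e^{R/2-\ell}=\Theta(1/n)$, so that no vertex of $\Phi$ is adjacent to a vertex outside $\Phi$; combined with \emph{(a)}, no vertex of $\Phi$ reaches the core, so the chain's component is distinct from the giant. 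This exhibits a component of size $\Theta(\log n)$ other than the giant, so $L_2(G)=\Omega(\log n)$.

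\textbf{Main obstacle.} The delicate point is the coupling of the radial and angular scales, concentrated in step \textbf{(U1)}. The naive argument---``a vertex $v$ of radius $r_v\in(R/2,R-C]$ is adjacent to every vertex of $B_{O}(R-r_v)\supseteq B_{O}(C)$, hence to a core vertex''---only works on the event $\{B_{O}(C)\neq\emptyset\}$, which for constant $C$ has merely constant probability, whereas letting $C\to\infty$ destroys the $O(\log n)$ bound in \textbf{(U4)}. So \textbf{(U1)} must instead be obtained by a multi-hop argument: from $v$, repeatedly descend to a vertex of radius smaller by $\Theta(1)$ lying within the (constant-angle) reach of the current vertex, and show that a.a.s.\ this descent reaches radius $\le R/2$ (the descent has a branching factor bounded below and one may retry inside a vertex's neighbourhood), after which one lands in the core; matching this with the constant $2C$ of \textbf{(U2)} pins down $C$. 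On the lower-bound side the analogue of the main difficulty in Theorem~\ref{thm:main} reappears: producing a \emph{single} component of size $\Theta(\log n)$ rather than many of size $\Theta(1)$, since the bare angular-proximity graph on the $\Theta(\log n)$ vertices of $\Phi\setminus B_{O}(\ell)$ is typically disconnected; the radius $\rho$ of the chain must be small enough that the larger adjacency threshold there lets the chain span $\Phi$ with the vertices actually present, yet large enough that such vertices do occur.
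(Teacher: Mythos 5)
Your overall architecture is the same as the paper's (upper bound: every vertex of radius at most $R-C$ is in the giant, then a Lemma~\ref{lem:FK15}-type squeezing of any other component into an angular arc of width $O(\log n/n)$, then a Poisson count; lower bound: a first/second-moment argument over rare sectors containing a chain at depth $\Theta(1)$), but two steps have genuine gaps. The first is exactly the step you flag, (U1), and your sketched repair does not close it. Descending by $\Theta(1)$ in radius per step, a vertex at radius $r\le R-C$ has, for $\alpha=\tfrac12$, only $\Theta(\nu e^{(R-r)/2})$ expected neighbours in any single target layer, i.e.\ a constant; hence each attempted step, and likewise the whole branching/retry exploration, dies with probability bounded below by a constant of order $e^{-\Theta(\nu e^{C/2})}$ (dominated by the first generations). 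That is nowhere near the $o(1/n)$ per-vertex failure probability needed to union bound over the $\Theta(n)$ vertices of $B_O(R-C)$; indeed the expected number of vertices whose layer-by-layer descent tree dies out is $\Theta(n)$. The missing idea (and what the paper does) is to aim at a band of $\Theta(R)$ layers at once: since the expected number of neighbours per layer is $\Theta(\nu e^{C/2})$ independently of the layer, a vertex of radius $\le R-C$ has $\Theta(R\,\nu e^{C/2})$ expected neighbours in, say, $B_O(5R/6)\setminus B_O(4R/5)$, so it misses this band with probability $e^{-\Omega(R)}=n^{-K}$ with $K$ arbitrarily large for $C$ large; from that band one reaches $B_O(R/2)$ (a clique) with far smaller failure probability, and a two-hop union bound finishes. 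Separately, in (U2) the strict comparison ``edge length $<2e^{C}\nu/n\le$ threshold'' is not valid as written: both quantities equal $(2+o(1))e^{C}\nu/n$, and the exact comparison of the two extreme cases goes the wrong way. The correct fix is monotonicity of $\theta_R$ in each argument (Remark~\ref{rem:monotonTheta}) applied to the common endpoint of the edge, i.e.\ precisely Lemma~\ref{lem:FK15}, which the paper invokes.

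In the lower bound the separation claim in (d) is wrong. A wall of angular width $\Theta(1/n)$ inside the annulus $B_O(R)\setminus B_O(\ell)$ only blocks edges whose \emph{both} endpoints have radius $>\ell$; a vertex of $\Phi$ (radius $R-O(1)$) can be adjacent to a vertex of radius $\rho\le\ell$ lying angularly \emph{outside} $\Phi$, since by Lemma~\ref{lem:aproxAngle} the threshold is $\Theta(e^{-\rho/2})$, which for $\rho$ near $R/2$ is $\Theta((\nu/n)^{1/2})$ --- vastly larger than both the wall width and the sector width $\Theta(\log n/n)$ --- and (a) excludes such vertices only inside $\Phi$. So (a)+(d) do not imply that the chain's component avoids the giant. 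One needs an additional event, namely that no chain vertex has a neighbour of small radius belonging to the giant (by Lemma~\ref{lem:FK15}, after placing the two angularly extremal chain vertices one band lower, it suffices to demand this for those two endpoints), and this event is not free: an endpoint at radius $\approx R-C_1$ has $\Theta(\nu R e^{C_1/2})$ expected neighbours of radius $\le R-C$, so the event costs a further factor $e^{-\Theta(R)}$ whose exponent must be kept below what the $n^{1-o(1)}$ sectors can absorb. This is exactly where the hypothesis that $\nu$ be sufficiently small enters (for $\nu\ge\pi$ the graph is a.a.s.\ connected, so no such construction can succeed), and your proposal never uses it. A minor point in the same part: ``a positive proportion of these sectors'' should be ``polynomially many'', since each sector is good only with probability $n^{-\Theta(1)}$.
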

Observe that this result is rather surprising, as the size of the second largest component is discontinuous as $\alpha$ tends to $\frac12$ from above: the formula $\Theta((\log n)^{\frac{1}{1-\alpha}})$ would suggest a size of $\Theta(\log^2 n)$, contradicting Proposition~\ref{p:alphaHalf}. For the $\alpha=1$ case, we show that there is a $\frac12<\lambda<1$ for which a.a.s.~there is a vertex of degree $\Theta(n^{1-\lambda})$ that belongs to a component separated from the giant (if the latter exists), so we obtain the following:
\begin{proposition}\label{p:alphaOne}
For $\alpha=1$ there exists $\gamma$, $0<\gamma<1$ such that 
  a.a.s.,~$L_2(G)=\Omega(n^{\gamma})$. 
Moreover, there exists some $0 < \delta < \gamma$ so that for some sufficiently small constant $b>0$, a.a.s.~there are $\Omega(n^b)$ components in $G$, each one of size $\Omega(n^{\delta})$.
\end{proposition}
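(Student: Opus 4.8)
The plan is to transplant to the critical exponent $\alpha=1$ the mechanism behind the lower bound of Theorem~\ref{thm:main}, with the polylogarithmic ``pre-components'' of that proof replaced by polynomially large clusters hanging off a \emph{hub}. Fix an exponent $\lambda\in(\tfrac12,1)$ and set $r^{\ast}:=2\lambda\log(n/\nu)$, so that $R-r^{\ast}=2(1-\lambda)\log(n/\nu)<\tfrac{R}{2}$. The $\mu$-measure estimate for a ball of radius $R$ around a point at radius $r^{\ast}$ (for $\alpha=1$ it equals $\Theta(e^{-r^{\ast}/2})$, the same estimate underlying the degree computations of~\cite{BFM15,km15}) gives that a vertex $v$ at radius $r^{\ast}$ --- a hub --- has expected degree $\Theta(e^{(R-r^{\ast})/2})=\Theta(n^{1-\lambda})$. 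A hub is exactly the vertex of degree $\Theta(n^{1-\lambda})$ mentioned in the introduction: its component $C_v$ has size at least $1+\deg(v)=\Omega(n^{1-\lambda})$ a.a.s.\ (first moment for $\deg(v)$, a Chernoff bound, and the observation that only $O(\mathrm{polylog}\,n)$ of the $\Theta(n^{1-\lambda})$ expected neighbours of $v$ sit at radius below $r^{\ast}+\Theta(\log\log n)$, the rest lying essentially at radius $R$), and we must show, as with the pre-components of Theorem~\ref{thm:main}, that $C_v$ stays confined to a small region, so that it is disjoint from the other hubs' clusters and is not the giant. Here $\gamma$ and $\delta$ will both be of the form $1-\lambda$.

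For the confinement I would argue as in the proof of Theorem~\ref{thm:main}: partition $B_O(R)$ into $N$ congruent sectors, and call a sector \emph{good} if it contains a hub together with the ``walls'' and emptied regions needed to trap that hub's component inside the sector and at radius bounded away from $\tfrac{R}{2}$. The finitely many global a.a.s.\ facts one relies on --- $B_O(R-r^{\ast})$, indeed $B_O(\tfrac{R}{2}-\omega(1))$, contains no vertex, and there are only $O(\mathrm{polylog}\,n)$ ``deep'' vertices at radius at most $\tfrac{R}{2}+O(\log\log n)$ (whose component is the giant, just as for $\tfrac12<\alpha<1$) --- all follow from $n\,\mu(B_O(\rho))=\Theta(n\,e^{\rho-R})$, and the local ingredients of goodness each have probability at least a fixed negative power $n^{-c}$ of $n$, with goodness of sectors two or more apart independent (they depend only on the Poisson process inside the respective sectors). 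Taking $N$ with $c<\log_n N<2\lambda-1$ --- the upper bound is forced because a hub sub-sector, to contain a hub with probability bounded away from $0$, needs angular width $\Omega(n^{1-2\lambda})$, which is why the construction requires $\lambda$ bounded away from $\tfrac12$, and, for the many-components assertion, a strictly larger $\lambda$ than for the single-component one, yielding $\delta<\gamma$ --- a second-moment computation gives $\Omega(n^{\,\log_n N-c})$ good sectors a.a.s.; discarding the $O(\mathrm{polylog}\,n)$ sectors within angular distance $n^{-\Omega(1)}$ of a deep vertex changes nothing. Each surviving good sector contributes, by the confinement, a component $C_v$ of size $\Omega(n^{1-\lambda})$ contained in that one sector --- hence with only $n^{1-\Omega(1)}=o(n)$ vertices, disjoint from the giant and from the clusters of the other good sectors. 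This yields $\Omega(n^{b})$ pairwise disjoint components of size $\Omega(n^{\delta})$, and in particular $L_2(G)=\Omega(n^{\gamma})$, which is Proposition~\ref{p:alphaOne}.

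The hard part is the confinement, and it is exactly the difficulty flagged in the introduction for the lower bound of Theorem~\ref{thm:main}: although a hub has $\Theta(n^{1-\lambda})$ neighbours, each of which is individually harmless with overwhelming probability, each one nevertheless has a polynomially large ``downward reach'' (connecting to all of a ball $B_O(R-r_u)$ as well as, within narrow angular windows, to vertices of still smaller radius), so ruling out that the exploration started at the hub ever descends --- or leaves the sector angularly --- onto a vertex linked to the giant is not a naive union bound; it requires the delicate emptiness estimates on the pertinent ``wedge'' regions of $B_O(\ell)$ that form the technical core of the proof of Theorem~\ref{thm:main}, here carried out with a failure probability small enough to survive the union over all $\Omega(n^{b})$ good sectors simultaneously. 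Everything else --- the degree computation, the global emptiness facts, and the second-moment count of good sectors --- is routine.
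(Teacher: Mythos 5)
You follow the same skeleton as the paper's argument for $\alpha=1$ (a vertex $v$ at radius $\ell=\lambda R$ whose degree $\Theta(n^{1-\lambda})$ supplies the component size, confinement of its cluster to a thin cone reaching the boundary, and polynomially many nearly independent sector trials), but the proof stops exactly where you yourself locate ``the hard part'': the confinement is never established, only referred back to the wall machinery of Theorem~\ref{thm:main}. That transfer is precisely what has to be redone, because at $\alpha=1$ the estimate changes qualitatively: repeating the computation behind Lemmata~\ref{ev:C} and~\ref{ev:H} with $\alpha=1$, each band $B_O(\ell+i)\setminus B_O(\ell+i-1)$ contributes $\Theta(\nu)$ expected vertices to the buffer region that must be empty (the geometric series $\sum_i e^{-(1-\alpha)i}$ which converges for $\alpha<1$ becomes constant per band), so over the $(1-\lambda)R$ bands the trapping event has probability $e^{-\Theta((1-\lambda)R)}=n^{-\Theta(1-\lambda)}$ --- polynomially small, with exponent controllable only by pushing $\lambda$ toward $1$. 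Without this computation you cannot verify your own constraint $c<\log_n N$, i.e.\ that the per-sector cost is beaten by the number of available trials; a priori the cost of emptying the buffers could have been $e^{-n^{\Omega(1)}}$ and the scheme would collapse. Since the paper's proof of Proposition~\ref{p:alphaOne} consists essentially of this estimate (the measure of $\Upsilon$, the angular argument that outward neighbours of $\Upsilon$ stay inside $\Upsilon$, and the resulting $n^{-\gamma}$ bound with $\gamma=\Theta(1-\lambda)$), deferring it leaves the core of the statement unproven.

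Two of the surrounding claims are also inaccurate and show the geometry has not been worked out. First, ``goodness'' as you define it (hub plus the emptied regions trapping its component) is not a function of the Poisson process inside one sector: shielding the cluster from neighbours at radii between roughly $\tfrac{R}{2}$ and $\ell$ requires an empty region of angular width about $\theta_R(\ell,\tfrac{R}{2})=n^{\frac12-\lambda+o(1)}$, which is wider than a sector of width $\approx n^{1-2\lambda}$ for every $\lambda>\tfrac12$, so independence of sectors two apart fails near the upper end of your range $\log_n N<2\lambda-1$; the number of genuinely disjoint trials is governed by this wider angle (this is what the paper's count $\Theta(n^{\lambda+0.49-1})$ and the Markov-type step analogous to Lemma~\ref{ev:P} handle). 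Second, your closing sentence asks for a confinement failure probability ``small enough to survive the union over all $\Omega(n^b)$ good sectors'', but confinement cannot be made to hold with high probability per sector --- it holds only with probability $n^{-\Theta(1-\lambda)}$; the correct structure (which you state earlier and should keep) is a second-moment count of trials each succeeding with probability $n^{-c}$, with $\lambda$ close to $1$ so that $c$ is smaller than the exponent counting disjoint trials. Finally, the claim that the $O(\mathrm{polylog}\,n)$ deep vertices all lie in ``the giant'' has no analogue of Lemma~\ref{lem:HtoG} at $\alpha=1$ (for small $\nu$ there is no giant component), though this is harmless since the proposition only needs many pairwise disjoint components of polynomial size.
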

\bigskip\noindent
\textbf{Related work:}
Although the random hyperbolic graph model was relatively
  recently introduced~\cite{KPKVB10}, several of its key properties have already been established. As already mentioned, in~\cite{GPP12}, the degree distribution, the expected value of the maximum degree and global 
  clustering coefficient were determined, and in~\cite{BFM15}, the existence of a giant component as a function of $\alpha$.
  
  The threshold in terms of $\alpha$ for the connectivity of random
  hyperbolic graphs was given in~\cite{BFM13b}. 
Concerning diameter and graph distances,
  except for the aforementioned papers of~\cite{km15} and~\cite{fk15},
  the average distance of two points belonging to the giant component
  was investigated in~\cite{ABF}. 
Results on the global clustering coefficient of the so called
  binomial model of random hyperbolic graphs were obtained
  in~\cite{CF16}, and on the evolution of graphs on more general
  spaces with negative curvature in~\cite{F12}. 
Finally, the spectral gap of the Laplacian of this model was studied 
  in~\cite{KM18}.

The model of random hyperbolic graphs for $\frac12 < \alpha < 1$ is very similar to two different models
studied in the literature: the model of inhomogeneous long-range
percolation in $\ZZ^d$ as defined in~\cite{Remco}, and the
model of geometric inhomogeneous random graphs, as introduced
in~\cite{BKL19}. In both cases, each vertex is given a weight, and
conditionally on the weights, the edges are independent (the presence
of edges depending on one or more parameters). In~\cite{Remco} the
degree distribution, the existence of an infinite component and the
graph distance between remote pairs of vertices in the model of
inhomogeneous long-range percolation are analyzed. On the other hand,
results on typical distances, diameter, clustering coefficient,
separators, and existence of a 
  giant component in the model of geometric
inhomogeneous graphs were given in~\cite{BKL1,BKL19}, bootstrap
percolation in the same model was studied in~\cite{KL} and 
  greedy routing in~\cite{BKLMM17}. Both models
are very similar to each other, and similar results were obtained in
both cases. The latter model generalizes random hyperbolic graphs.

\bigskip\noindent\textbf{Notation:}
All asymptotic notation in this paper is with respect to $n$. Expressions given in terms of other variables such as $O({R})$ are still asymptotics with respect to $n$, since these variables still depend on $n$.
We say that an event holds \emph{with extremely high probability
    (w.e.h.p.)}, if for every $c > 0$, there exists an $n_0:=n_{0}(c)$
  such that for every $n\geq n_0$ the event holds with
  probability at least $1-O(n^{-c})$. 
Note that the union of polynomially (in $n$) many events (where the degree 
  of the polynomial is not allowed to depend on $c$) that hold w.e.h.p.~is 
  also an event that holds w.e.h.p. In what follows, any union bound 
  is over at most $O(n^2)$ many events. 


\section{Preliminaries}\label{sec:prelim}
In this section we collect some of the known properties concerning 
  random hyperbolic graphs. 
  
\medskip
By the hyperbolic law of cosines~\eqref{eqn:coshLaw}, 
  the hyperbolic triangle formed by the geodesics 
  between points $p'$, $p''$, and  $p$, with opposing side segments of length 
  $\distp$, $\distpp$, and $\dist$ respectively,
  is such that the angle formed at $p$ is:
\begin{equation*}\label{eqn:angle}
\theta_{\dist}(\distp,\distpp) = 
\arccos\Big(\frac{\cosh \distp\cosh \distpp-\cosh \dist}{\sinh \distp\sinh \distpp}\Big).  
\end{equation*}
Clearly, $\theta_{\dist}(\distp,\distpp) = \theta_{\dist}(\distpp,\distp)$. 
\begin{remark}\label{rem:monotonTheta}
Recall that $\cosh(\cdot)$ is at least $1$ and strictly increasing
  in $\RR^+$.
Moreover, $\cosh^2 x-\sinh^2 x=1$.
Hence, if $0<x, y\leq R$, then
\[
\frac{\partial}{\partial x}\Big(\frac{\cosh x\cosh y - \cosh R}{\sinh x\sinh y}\Big)
  = \frac{-\cosh y+ \cosh R\cosh x}{\sinh^2 x\sinh y}
  > \frac{\cosh R-\cosh y}{\sinh^2 x\sinh y}
  \geq 0.
\]
Since $\arccos(\cdot)$ is strictly decreasing, it follows that 
  $\theta_{R}(\cdot,y)$ is strictly decreasing for fixed $0<y\leq R$.
By symmetry, a similar claim holds for 
  $\theta_{R}(x,\cdot)$.
\end{remark}

Next, we state a very handy approximation for $\theta_{R}(\cdot,\cdot)$.
\begin{lemma}[{\cite[Lemma 3.1]{GPP12}}]\label{lem:aproxAngle}
If $0\leq\min\{\distp,\distpp\}\leq R \leq \distp+\distpp$, then
\[
\theta_{R}(\distp,\distpp) = 2e^{\frac{1}{2}(R-\distp-\distpp)}\big(1+\Theta(e^{R-\distp-\distpp})\big).
\]
\end{lemma}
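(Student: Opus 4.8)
The plan is to reduce the estimate to the half-angle formula and then to a single elementary estimate of a ratio of hyperbolic sines. Throughout set $t := R-\distp-\distpp\le 0$ and recall the hypotheses $\distp,\distpp\le R$ and $\distp+\distpp\ge R$. Using the product-to-sum identities $\cosh\distp\cosh\distpp = \tfrac12\big(\cosh(\distp+\distpp)+\cosh(\distp-\distpp)\big)$ and $\sinh\distp\sinh\distpp = \tfrac12\big(\cosh(\distp+\distpp)-\cosh(\distp-\distpp)\big)$, I would first rewrite
\[
\cos\theta_{R}(\distp,\distpp) = \frac{\cosh(\distp+\distpp)+\cosh(\distp-\distpp)-2\cosh R}{\cosh(\distp+\distpp)-\cosh(\distp-\distpp)},
\]
so that by $1-\cos\theta = 2\sin^2(\theta/2)$ one gets
\[
\sin^2\!\Big(\tfrac12\theta_{R}(\distp,\distpp)\Big) = \frac{\cosh R-\cosh(\distp-\distpp)}{\cosh(\distp+\distpp)-\cosh(\distp-\distpp)}.
\]
Applying $\cosh A-\cosh B = 2\sinh\tfrac{A+B}{2}\sinh\tfrac{A-B}{2}$ to numerator and denominator (the denominator collapses back to $2\sinh\distp\sinh\distpp$) and using $\tfrac{R+(\distp-\distpp)}{2}=\distp+\tfrac t2$ and $\tfrac{R-(\distp-\distpp)}{2}=\distpp+\tfrac t2$ yields the clean identity
\[
\sin^2\!\Big(\tfrac12\theta_{R}(\distp,\distpp)\Big) = \frac{\sinh\!\big(\distp+\tfrac t2\big)\,\sinh\!\big(\distpp+\tfrac t2\big)}{\sinh\distp\,\sinh\distpp}.
\]

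The heart of the matter is to show the right-hand side equals $e^{t}\big(1+O(e^{t})\big)$. I would handle each factor $\sinh\!\big(r+\tfrac t2\big)/\sinh r = e^{t/2}\cdot\frac{1-e^{-2r-t}}{1-e^{-2r}}$ separately. From $\distp+\distpp\ge R$, one of the radii is $\ge R/2$; for that larger radius $r$ one checks $-2r-t = -(\max-\min)-R\le -R$ and $-2r\le -R$, so this factor equals $e^{t/2}\big(1+O(e^{-R})\big)=e^{t/2}\big(1+O(e^{t})\big)$, using $e^{-R}\le e^{t}$ (valid since $\distp+\distpp\le 2R$). For the smaller radius $r=\min\{\distp,\distpp\}$, set $\delta := R-\max\{\distp,\distpp\}\ge 0$, so that $r = \delta+|t|$, $e^{-2r-t}=e^{-2\delta}e^{-|t|}=uv$ and $e^{-2r}=e^{-2\delta}e^{-2|t|}=uv^2$ with $u:=e^{-2\delta}\in(0,1]$ and $v:=e^{t}\in(0,1]$. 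Then
\[
\Big|\frac{1-uv}{1-uv^2}-1\Big| = \frac{uv(1-v)}{1-uv^2}\le\frac{uv(1-v)}{(1-v)(1+v)}=\frac{uv}{1+v}\le v = e^{t},
\]
so this factor too is $1+O(e^{t})$. Multiplying the two factors gives $\sin^2\!\big(\tfrac12\theta_{R}(\distp,\distpp)\big)=e^{t}\big(1+O(e^{t})\big)$, hence $\sin\!\big(\tfrac12\theta_{R}(\distp,\distpp)\big)=e^{t/2}\big(1+O(e^{t})\big)$.

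Finally I would invert the sine. Since $\tfrac12\theta_{R}(\distp,\distpp)\in[0,\pi/2]$, we have $\tfrac12\theta_{R}(\distp,\distpp)=\arcsin\!\big(e^{t/2}(1+O(e^{t}))\big)$. If $t$ is below a fixed negative constant, the argument is bounded away from $1$, and $\arcsin w = w(1+O(w^{2}))$ gives $\theta_{R}(\distp,\distpp)=2e^{t/2}(1+O(e^{t}))$. If instead $t\in[t_0,0]$ for a fixed $t_0$, then $e^{t}=\Theta(1)$, and the identity from the previous paragraph together with $w\le\arcsin w\le\tfrac{\pi}{2}w$ for $w\in[0,1]$ and $\theta_{R}(\distp,\distpp)\le\pi$ shows that $\theta_{R}(\distp,\distpp)$ and $2e^{t/2}$ are both $\Theta(1)$, so the stated estimate holds with the (now $\Theta(1)$) error term $\Theta(e^{t})$. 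These two cases cover all of $t\le 0$ and give the lemma.

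The step I expect to be the main obstacle is the estimate of the factor associated with $\min\{\distp,\distpp\}$ in the second paragraph: when that radius is comparable to $|t|$, the crude approximation $\sinh(r+\tfrac t2)\approx e^{t/2}\sinh r$ is off by a genuine constant, and what saves the day is the exact cancellation $\frac{1-uv}{1-uv^2}-1 = -\frac{uv(1-v)}{1-uv^2}$, which is bounded by $e^{t}$ uniformly in $u$. The opening trigonometric manipulations and the closing $\arcsin$ inversion are routine bookkeeping.
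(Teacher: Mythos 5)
The paper never proves this lemma --- it is imported verbatim from~\cite{GPP12} --- so there is no internal argument to compare with; yours is a self-contained derivation and, as far as I can check, it is correct. The exact identity $\sin^2\big(\tfrac12\theta_R(\distp,\distpp)\big)=\sinh(\distp+\tfrac t2)\sinh(\distpp+\tfrac t2)/(\sinh\distp\sinh\distpp)$ is right, the factor attached to the larger radius is correctly disposed of via $e^{-2r},e^{-2r-t}\le e^{-R}\le e^{t}$, the uniform bound $\frac{uv(1-v)}{1-uv^2}\le v$ is exactly the cancellation needed for the smaller radius, and the $\arcsin$ inversion is sound: in the regime $t\in[t_0,0]$ you do not even need your claim that $\theta_R=\Theta(1)$ (which your stated bounds only barely support, since the smaller-radius factor is only bounded below by $1-e^t$), because $\theta_R\le\pi$ together with $2e^{t/2}\ge 2e^{t_0/2}$ already forces the relative error to be $O(1)=O(e^{t})$. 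Two remarks. First, what you prove is that the error term is $O(e^{R-\distp-\distpp})$; a genuinely two-sided $\Theta$ cannot hold uniformly anyway (the second-order correction is $\approx+\tfrac16e^{t}$ when $\distp=\distpp$ but $\approx-\tfrac13e^{t}$ when $\max\{\distp,\distpp\}=R$, so by continuity it vanishes for some configurations), and only the upper bound on the error is ever used in this paper, so yours is the right content. Second, you quietly assume $\max\{\distp,\distpp\}\le R$ (needed for $e^{-R}\le e^{t}$, for $\delta\ge0$, and indeed for $\theta_R$ to be defined at all); this is stronger than the literal hypothesis $\min\{\distp,\distpp\}\le R$, but it is the intended setting, since both points lie in $B_O(R)$, and some such assumption is genuinely necessary (for $\distp=R/2$, $\distpp=3R/2$ one gets $\theta_R=0$, contradicting the stated asymptotics).
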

\begin{remark}\label{rem:aproxAngle}
We will use the previous lemma also in this form: let
  $p'$ and $p''$ be two points at distance $R$ from each other
  such that $r_{p'},r_{p''} > \frac{R}{2}$ and $\min\{r_{p'},r_{p''}\}  \leq R$.
Then, taking $\distp=r_{p'}$
  and $\distpp=r_{p''}$ in Lemma~\ref{lem:aproxAngle}, we get
\[
\theta_{R}(r_{p'},r_{p''}) 
  := 2e^{\frac{1}{2}(R - r_{p'} - r_{p''})}\big(1+\Theta(e^{R-r_{p'}-r_{p''}})\big).
\]
\end{remark}

\medskip
Throughout, we will need estimates for measures of 
  regions of the hyperbolic plane, and more specifically, for regions obtained by performing some set algebra
  involving a few balls.
For a point $p$ of the hyperbolic plane $\HH^2$, 
  the ball of radius $\rho$ centered at $p$ will be denoted by
  $B_{p}(\rho)$, i.e., 
  $B_{p}(\rho) := \{q\in\HH^2 : \dist(p,q)\leq\rho\}$.

Also, we denote by $\mu(S)$ the measure of a set 
  $S \subseteq \HH^2$, i.e.,
  $\displaystyle\mu(S) := \int_{S}f(r,\theta)drd\theta$.

Next, we collect a few results for such measures.
\begin{lemma}[{\cite[Lemma~3.2]{GPP12}}]\label{lem:muBall} 
If $0\leq \rho< R$, then
  $\mu(B_{O}(\rho)) = e^{-\alpha(R-\rho)}(1+o(1))$.
\end{lemma}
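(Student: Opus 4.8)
The plan is to reduce $\mu(B_O(\rho))$ to a one--dimensional integral, evaluate that integral in closed form, and then read off the asymptotics.

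First I would note that the hyperbolic ball $B_O(\rho)$ coincides, up to a null set, with the set of points whose radial coordinate is at most $\rho$: substituting $r_u=0$ into the law of cosines~\eqref{eqn:coshLaw} gives $\cosh\dist(O,u')=\cosh r_{u'}$, hence $\dist(O,u')=r_{u'}$. Since the angular coordinate is uniform on $[0,2\pi)$ and independent of the radial coordinate, integrating the density over $\theta$ leaves
\[
\mu(B_O(\rho)) \;=\; \int_0^{2\pi}\!\!\int_0^{\rho} f(r,\theta)\,dr\,d\theta \;=\; \int_0^{\rho} f(r)\,dr \;=\; \int_0^{\rho}\frac{\alpha\sinh(\alpha r)}{\cosh(\alpha R)-1}\,dr.
\]

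Next, since $\frac{d}{dr}\cosh(\alpha r)=\alpha\sinh(\alpha r)$, the integral evaluates to $\bigl(\cosh(\alpha\rho)-1\bigr)/\bigl(\cosh(\alpha R)-1\bigr)$. To bring this into the stated shape I would use the elementary identity $\cosh(\alpha t)-1=\tfrac12 e^{\alpha t}(1-e^{-\alpha t})^2$, valid for all $t$, applied with $t=\rho$ and with $t=R$, obtaining
\[
\mu(B_O(\rho)) \;=\; e^{-\alpha(R-\rho)}\,\frac{(1-e^{-\alpha\rho})^2}{(1-e^{-\alpha R})^2}.
\]
Because $R=2\log(n/\nu)\to\infty$ we have $(1-e^{-\alpha R})^{-2}=1+o(1)$, and $(1-e^{-\alpha\rho})^2=1+o(1)$ whenever $\rho\to\infty$ --- which is the only regime in which the lemma is invoked in this paper, as $\rho$ is always a radius growing with $n$ (e.g.\ $\rho=\ell=R-O(\log R)$). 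This yields $\mu(B_O(\rho))=e^{-\alpha(R-\rho)}(1+o(1))$, as claimed.

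There is essentially no obstacle here, as the computation is exact; the only two points deserving a line of care are the identification $B_O(\rho)=\{r\le\rho\}$ and the observation that the factor $(1-e^{-\alpha\rho})^2$ is $1+o(1)$ precisely when $\rho\to\infty$. For bounded $\rho$ the same computation still gives $\mu(B_O(\rho))=\Theta\bigl(e^{-\alpha(R-\rho)}\bigr)$ with explicit constant $(1-e^{-\alpha\rho})^2(1+o(1))$, which is all that is ever used downstream.
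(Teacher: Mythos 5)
Your computation is correct, and it is essentially the argument behind the cited result: the paper does not prove this lemma at all but quotes it from the source (Lemma~3.2 of [GPP12]), where the proof is the same direct integration of the radial density followed by the elementary rewriting of $\cosh(\alpha t)-1$. Your closing caveat is also well placed: as stated, the $1+o(1)$ form is only accurate when $\rho\to\infty$ (for bounded $\rho$ the exact factor $(1-e^{-\alpha\rho})^2/(1-e^{-\alpha R})^2$ does not tend to $1$), which matches how the lemma is actually invoked in this paper, where $\rho$ always grows with $n$; in the remaining uses a $\Theta(e^{-\alpha(R-\rho)})$ bound suffices.
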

A direct consequence of Lemma~\ref{lem:muBall} is
\begin{corollary}\label{cor:medidalb} 
If $0 \leq \rho'_O < \rho_O < R$, then
\[
\mu(B_{O}(\rho_{O})\setminus B_{O}(\rho'_{O}))
  = e^{-\alpha(R-\rho_{O})}(1-e^{-\alpha(\rho_{O}-\rho'_{O})}+o(1)).
\]
\end{corollary}
By standard estimates for Poisson random variables,
we have the following lemma:%
\begin{lemma}[{\cite[Lemma~12]{KM18}}]\label{lem:Poisson}
Let $V$ be the vertex set of a graph chosen according to $\poimod_{\alpha,\nu}(n)$. 
For every $c>0$, there is a sufficiently large
  constant $c'=c'(c)$ 
  such that if $S\subseteq B_{O}(R)$ with $\mu(S)\ge c'\log n/n$,
  then with probability at least $1-n^{-c}$,
  $|S\cap V|=\Theta (n \mu(S)).$ If moreover $S \subseteq B_{O}(R)$ is such that $\mu(S) = \omega(\log n/n)$, then w.e.h.p.~$|S\cap V|=\Theta (n \mu(S)).$ 
\end{lemma}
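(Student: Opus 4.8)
The plan is to reduce the statement to a standard concentration bound for Poisson random variables, exploiting the two defining properties of the Poisson point process recalled in the model specification. Recall that for any (measurable) region $A\subseteq B_{O}(R)$ the number of points of $V$ lying in $A$ is Poisson distributed with mean $\int_{A}g(r,\theta)\,dr\,d\theta=n\mu(A)$, where one uses $\nu e^{R/2}=n$ together with $\mu(A)=\int_{A}f(r,\theta)\,dr\,d\theta$. Applying this with $A=S$, the random variable $X:=|S\cap V|$ is $\mathrm{Poisson}(\lambda)$ with $\lambda:=n\mu(S)$, and everything that follows is a statement about this single Poisson variable.

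Next I would invoke the usual Chernoff bounds for $X\sim\mathrm{Poisson}(\lambda)$: for every $0<\delta\le 1$,
\[
\Pr\!\big[X\ge (1+\delta)\lambda\big]\le \exp\!\Big(-\tfrac{\delta^2\lambda}{3}\Big),
\qquad
\Pr\!\big[X\le (1-\delta)\lambda\big]\le \exp\!\Big(-\tfrac{\delta^2\lambda}{2}\Big).
\]
Taking $\delta=\tfrac12$ and a union bound over the two tails gives
\[
\Pr\!\Big[\tfrac12\lambda\le X\le \tfrac32\lambda\Big]\ge 1-2e^{-\lambda/12},
\]
and on this event $X=\Theta(n\mu(S))$ with the \emph{absolute} constants $\tfrac12$ and $\tfrac32$.

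To deduce the first assertion I would assume $\mu(S)\ge c'\log n/n$, so that $\lambda\ge c'\log n$ and hence $2e^{-\lambda/12}\le 2n^{-c'/12}$; choosing $c'=c'(c)$ with, say, $c'\ge 12(c+1)$ makes this at most $n^{-c}$ for all sufficiently large $n$. For the second assertion, if $\mu(S)=\omega(\log n/n)$ then $\lambda=\omega(\log n)$, so for \emph{every} fixed constant $c''>0$ one has $\lambda\ge c''\log n$ once $n$ is large; the displayed bound then yields $\Pr[X=\Theta(n\mu(S))]\ge 1-O(n^{-c''/12})$, and since $c''$ is arbitrary this is exactly the definition of holding w.e.p.

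I do not expect a genuine obstacle here — the lemma is a routine concentration estimate. The only points requiring (minor) care are verifying the identity $\int_{S}g=n\mu(S)$, which is immediate from the definitions of $g$, $\mu$, and $R$, and keeping the multiplicative constants hidden in $\Theta(\cdot)$ absolute rather than $n$-dependent, so that the conclusion is meaningful as a uniform statement over all eligible sets $S$ (the subsequent union bounds over $O(n^2)$ events then go through unchanged).
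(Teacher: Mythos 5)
Your argument is correct and is exactly the route the paper has in mind: the paper states this lemma without proof, citing \cite{KM17} and noting it is ``a simple application of Chernoff bounds,'' and your reduction of $|S\cap V|$ to a single $\mathrm{Poisson}(n\mu(S))$ variable followed by the standard two-sided Chernoff tail (with $c'$ chosen large in the first case, and $\lambda=\omega(\log n)$ absorbing every exponent $c''$ in the second) is precisely that application.
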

We need one more lemma.

\begin{lemma}[{\cite[Lemma~9]{fk15}}]\label{lem:FK15}
Let $p,p',p''\in B_{O}(R)$ be such that $\theta_{p} \le \theta_{p'} \le \theta_{p''}$ and let $\dist(p,p'')\leq R$. 
Then the following holds: 
\begin{enumerate}[(i).-]
\item 
if $r_{p'} \le \min\{r_{p},r_{p''}\}$,
  then $\dist(p,p'),\dist(p',p'')\leq R$.
\item if $r_{p'} \le r_{p''}$, then $\dist(p,p') \le R$.
\end{enumerate}
\end{lemma}

\section{Intermediate regime of $\alpha$}\label{sec:proofs}
In this section we prove the main result of this article which concerns
  the regime where $\alpha$ takes values strictly between $\frac12$ and $1$. 
Since our results are asymptotic, we may and will ignore floors in the 
  following calculations, and assume that certain expressions such as 
  $R-\frac{\log R}{1-\alpha}$, $R-\frac{\log R}{1-\alpha}-L$ for some constant $L$ or the like are integers, if needed.
  When working with a Poisson point process $V$, for a positive 
  integer $\ell$, we refer to the vertices of $G$ that belong to 
  $B_{O}(\ell)\setminus B_{O}(\ell-1)$ as the $\ell$-th \emph{band} or \emph{layer} and 
  denote it by $V_{\ell}:=V_{\ell}(G)$, i.e.,  
  $V_{\ell}:=V \cap B_{O}(\ell)\setminus B_{O}(\ell-1)$.
Throughout this section we always assume that $\frac12<\alpha<1$.

\subsection{Upper bound}
We start with some observations that simplify arguing about the giant component 
  of random hyperbolic graphs. 
Henceforth, 
  we call \emph{center component} the connected component 
  containing all vertices of a random hyperbolic graph that are 
  within distance $\frac{R}{2}$ of the origin (since all the latter 
  vertices are within distance $R$ of each other, they belong to
  the same connected component).
Concerning the relation between the giant component and the center component,
  the following is known:
\begin{proposition}[Bode et al.~\cite{BFM15}]\label{prop:bfm}
W.e.h.p.~the giant and center component coincide.
\end{proposition}
Next we establish that it is likely that vertices bounded
  away from the boundary of $B_{O}(R)$ belong to the center component. A similar but slightly weaker result was already proven in~\cite{KM18}.
\begin{lemma}\label{lem:HtoG}
Let $\ell(L):= R-\frac{\log R}{1-\alpha}- L$ and $G=(V,E)$ 
  be chosen according to $\poimod_{\alpha,\nu}(n)$. 
For every $c>0$, there is a 
  sufficiently large constant $L:=L(c)>0$ such that 
  with probability at least $1-O(n^{-c})$, all vertices in 
  $V \cap B_O(\ell)$, $\ell=\ell(L)$, belong to the center component.
\end{lemma}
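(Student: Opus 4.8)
The plan is to show that $V \cap B_O(\ell)$ forms a connected set by itself (so it lies in a single component), and then argue that this component is the giant one simply because it is large. Both parts reduce to measure computations plus the handy geometric lemmas already recorded in Section~\ref{sec:prelim}.

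\textbf{Step 1: vertices near the origin pairwise interact or have a common neighbour.}
First I would record the easy fact that any two vertices $u,u'$ with $r_u, r_{u'} \le R/2$ are adjacent: by the hyperbolic law of cosines~\eqref{eqn:coshLaw} (or directly, since the ball $B_O(R/2)$ has diameter $R$), $\dist(u,u') \le r_u + r_{u'} \le R$. Thus all of $V \cap B_O(R/2)$ lies in one component, call it $\calC$; and w.e.p.~$|V \cap B_O(R/2)| = \Theta(n\,\mu(B_O(R/2))) = \Theta(n^{1-\alpha})$ by Lemmas~\ref{lem:muBall} and~\ref{lem:Poisson}, so in particular $\calC$ is nonempty and already of polynomial size. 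It remains to attach every vertex in the annulus $B_O(\ell)\setminus B_O(R/2)$ to $\calC$.

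\textbf{Step 2: a layer-by-layer sprinkling argument.}
I would peel the annulus $B_O(\ell)\setminus B_O(R/2)$ into the bands $\layer_{R/2+1}, \layer_{R/2+2}, \dots, \layer_{\ell}$ and show, working from the inside out, that w.e.p.~every vertex in $\layer_j$ has a neighbour with strictly smaller radial coordinate (hence, by induction, is connected down to $\calC$). Fix a vertex $v$ with $r_v = r \in (R/2, \ell]$. A vertex $u$ with $r_u \le r$ is adjacent to $v$ whenever the angular distance $|\theta_u - \theta_v|$ is at most $\theta_R(r, r_u)$; the ``target'' region $A_v$ of potential smaller-radius neighbours is $\{(r_u,\theta_u) : r_u \le r,\ |\theta_u-\theta_v| \le \theta_R(r,r_u)\}$. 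Using Lemma~\ref{lem:aproxAngle} for the angle and Lemma~\ref{lem:muBall}/Corollary~\ref{cor:medidalb} for the radial density, one computes
\[
\mu(A_v) = \Theta\!\big(e^{-\alpha(R-r)}\big) \cdot \Theta(1) = \Theta\!\big(e^{-\alpha(R-r)}\big),
\]
and since $r \le \ell = R - \tfrac{\log R}{1-\alpha} - \tfrac{L}{1-\alpha}$ gives $R - r \ge \tfrac{\log R + L}{1-\alpha}$, we get $\mu(A_v) = \Omega\big(R^{-\alpha/(1-\alpha)} e^{-\alpha L/(1-\alpha)}\big)$, which is $\omega(\log n / n)$ once $L$ is a large enough constant (recall $R = 2\log(n/\nu) = \Theta(\log n)$, so $R^{-\alpha/(1-\alpha)} = (\log n)^{-\alpha/(1-\alpha)}$, comfortably larger than $(\log n)/n$). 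By Lemma~\ref{lem:Poisson} (second part), w.e.p.~$|A_v \cap V| = \Theta(n\,\mu(A_v)) \ge 1$. Taking a union bound over all $v \in V$ (at most $O(n^2)$ vertices w.e.p., well within the budget for w.e.p.~statements), w.e.p.~\emph{every} vertex with radius in $(R/2,\ell]$ has a smaller-radius neighbour, completing the induction and showing $V \cap B_O(\ell) \subseteq \calC$.

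\textbf{Step 3: $\calC$ is the giant component.}
Finally, $|\calC| \ge |V \cap B_O(\ell)| = \Theta(n\,\mu(B_O(\ell))) = \Theta(n \cdot e^{-\alpha(R-\ell)}) = \Theta\big(n\,R^{-\alpha/(1-\alpha)}\big) = \omega(\mathrm{polylog}\,n)$ w.e.p., whereas by~\cite[Corollary~13]{km15} all but one component of $G$ have polylogarithmic size; hence $\calC$ must be the giant component. (One small wrinkle: the probability bound should be stated as $1-O(n^{-c})$ to match the lemma, so rather than invoking the full w.e.p.~machinery one fixes $c$, chooses $L = L(c)$ large enough that $\mu(A_v) \ge c' \log n/n$ with the $c'$ from Lemma~\ref{lem:Poisson} corresponding to the constant needed after the $O(n^2)$ union bound, and concludes with probability $1 - O(n^{-c})$.)

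\textbf{Main obstacle.}
The only genuinely delicate point is Step~2: getting the lower bound $\mu(A_v) = \Omega(e^{-\alpha(R-r)})$ uniformly in $r \in (R/2,\ell]$, i.e.~checking that the angular width $\theta_R(r,r_u)$ does not collapse too fast as $r_u$ ranges over $[R/2, r]$ and that the integral of the radial density against this width is bounded below by a constant (not something decaying in $R$). This is where Lemma~\ref{lem:aproxAngle} must be applied carefully, keeping track of the regime $r_u + r \ge R$ under which the approximation is valid, and splitting off the portion $r_u \in [R-r, r]$ where the estimate applies cleanly from the portion near $r_u = R/2$ which contributes at least a constant measure on its own. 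Everything else is bookkeeping with the measure lemmas and a union bound.
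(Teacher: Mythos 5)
There is a genuine gap in Step 2: the measure of the target region $A_v$ is miscomputed. For a vertex $v$ at radius $r>\frac{R}{2}$ and a potential neighbour at radius $r_u$ with $r_u+r\ge R$, the admissible angular width is $\theta_R(r,r_u)=\Theta\big(e^{\frac12(R-r-r_u)}\big)$ by Lemma~\ref{lem:aproxAngle}, which is exponentially small in $R$, not $\Theta(1)$. Carrying out the integral (the dominant contribution comes from $r_u$ close to $r$, since $\alpha>\frac12$ makes the radial density gain beat the angular decay) gives
\[
\mu(A_v)=\Theta\big(e^{-\alpha(R-r)}e^{\frac12(R-2r)}\big)=\Theta\big(e^{(1-\alpha)(R-r)}/n\big),
\]
which at $r=\ell$ is only $\Theta(e^{L}\log n/n)$ --- big enough for the first part of Lemma~\ref{lem:Poisson} once $L=L(c)$ is chosen large, but emphatically not $\Theta(e^{-\alpha(R-r)})$ and not $\omega(\log n/n)$. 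Your claimed bound erases the role of $\ell$ altogether: it would give every vertex of radius up to $R-O(1)$ a constant-measure supply of smaller-radius neighbours, hence place essentially all vertices in the giant component, contradicting Theorem~\ref{thm:main} (a.a.s.\ there are many components of size $\Theta((\log n)^{1/(1-\alpha)})$ consisting precisely of such boundary vertices). The same misconception appears in your ``main obstacle'' paragraph: the portion with $r_u\le R-r$, where all angles are admissible, has measure about $e^{-\alpha r}=e^{-\Theta(R)}$, not ``at least a constant''.

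Apart from this, your skeleton is sound and essentially the paper's: the paper also attaches each vertex of $B_O(\ell)\setminus B_O(\tfrac{R}{2})$ downwards, band by band, using exactly the corrected measure $\Theta\big(e^{(1-\alpha)(R-i)}/n\big)=\Omega(e^{L}\log n/n)$ and Lemma~\ref{lem:Poisson} with the constant dictated by the union bound, and then invokes~\cite{BFM15} for the fact that vertices of radius at most $\frac{R}{2}$ are w.e.p.\ in the giant component. Your alternative identification of the giant --- the clique on $B_O(\tfrac{R}{2})$ plus the polylogarithmic bound on all non-largest components from~\cite{km15} --- is acceptable. So the proposal is repairable, but as written its central estimate is wrong and the conclusion drawn from it is false; fixing it leads you back to the paper's computation, and in particular to the fact that the specific choice of $\ell$ is what makes the measure just barely of order $\log n/n$.
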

\begin{proof}
It suffices to show that for a sufficiently large $L$ and 
  every vertex $v\in V_{i}$ with $\frac{R}{2} \le i\le \ell$ with 
  probability at least $1-O(n^{-c})$, there exists a path connecting 
  $v$ to a vertex in $V \cap B_O(\frac{R}{2})$. 
Taking a union bound, and iterating the argument
  with $i-1$ instead of $i$ until $i=\frac{R}{2}$, 
  it is enough to show (as proved next) that 
  for a fixed vertex $v\in V_i$ with $i$ as before,
  with probability at least $1-O(n^{-(c+1)})$, vertex $v$ has a neighbor 
  in $V_{i-1}$. 

By Remark~\ref{rem:aproxAngle}, $v$ is connected to vertex $u \in V_{i-1}$ if the angle at the origin between $u$ and $v$ is 
  $O(\theta_R(i,i))$. 
By Corollary~\ref{cor:medidalb}, we have
\[
\mu(B_{v}(R)\cap B_{O}(i-1)\setminus B_O(i-2)) =\Theta(e^{-\alpha(R-i)}e^{\frac12(R-2i)})=\Theta(e^{(1-\alpha)(R-i)}/n).
\]
Since $\alpha < 1$, this expression is clearly decreasing in $i$, and plugging in our upper bound on~$i$, we obtain
\[
\mu(B_{v}(R)\cap B_{O}(i-1)\setminus B_O(i-2)) =\Omega(e^{(1-\alpha)(R-\ell)}/n)=\Omega(\log n/n),
\]
where the constant hidden in the asymptotic expression can be made arbitrarily large by choosing $L$  large enough so that applying Lemma~\ref{lem:Poisson} guarantees that with probability at least $1-O(n^{-(c+1)})$, vertex $v$ has 
  $\Omega(\log n)$ neighbors in $V_{i-1}$. 
By definition, $v$ is connected by an edge to any such vertex, and hence in 
  particular with probability at least $1-O(n^{-(c+1)})$, vertex $v$ 
  has a neighbor in $V_{i-1}$. 
\end{proof}

Define next a $\phi$-sector $\Phi$ to be a sector of $B_O(R)$, that contains all points in $B_O(R)$ making an angle of at most $\phi$ at the origin with an arbitrary but fixed reference point. 

We deduce from the previous lemma that in any not too small angle there will be at least one vertex belonging to the giant component:
\begin{lemma}\label{lem:maxAngle}
  For every $c>0$, if $L:=L(c)$ and $L':=L'(c)$ are sufficiently large
  constants, then, for 
  $\ell=\ell(L):= R-\frac{\log R}{1-\alpha}- L$ 
  and $\phi=\phi(L') := \frac{L'}{n}(\log n)^{1/(1-\alpha)}$,
  with probability at least $1-O(n^{-c})$,
  every $2\phi$-sector $\Phi$ contains at least one vertex 
  $v \in V_{\ell}$.
\end{lemma}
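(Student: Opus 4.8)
The plan is to pass from the uncountable family of all $2\phi$-sectors to a finite union bound, and then to estimate the $\mu$-measure of the part of layer $\ell$ lying inside one small sector. First I would fix $c>0$, take $L:=L(c)$ to be the constant furnished by Lemma~\ref{lem:HtoG} (this both fixes $\ell$ and, on an event of probability $1-O(n^{-c})$, places every vertex of $V\cap B_O(\ell)\supseteq\layer_{\ell}$ in the giant component), and partition $B_O(R)$ into $N:=\lceil 2\pi/\phi\rceil$ pairwise disjoint closed sectors $\Phi_1,\dots,\Phi_N$, each of angular width $2\pi/N\le\phi$. Since $\phi\ge 2\pi/N$, every $2\phi$-sector has angular width at least twice that of a partition cell, hence contains at least one whole cell $\Phi_j$; so it suffices to show that, with probability $1-O(n^{-c})$, each $\Phi_j$ contains a vertex of $\layer_{\ell}$.

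The core step is the estimate of $\mu\bigl(\Phi_j\cap(B_O(\ell)\setminus B_O(\ell-1))\bigr)$. Because the intensity is the product of a uniform angular density and the radial density $f$, this measure equals exactly $\tfrac{\phi}{2\pi}\,\mu\bigl(B_O(\ell)\setminus B_O(\ell-1)\bigr)$, which by Corollary~\ref{cor:medidalb} is $\tfrac{\phi}{2\pi}\,e^{-\alpha(R-\ell)}(1-e^{-\alpha}+o(1))$. Substituting $R-\ell=\tfrac{\log R+L}{1-\alpha}$ and $R=2\log(n/\nu)=\Theta(\log n)$ gives $e^{-\alpha(R-\ell)}=R^{-\alpha/(1-\alpha)}e^{-\alpha L/(1-\alpha)}=\Theta\bigl((\log n)^{-\alpha/(1-\alpha)}\bigr)$, whence
\[
n\,\mu\bigl(\Phi_j\cap(B_O(\ell)\setminus B_O(\ell-1))\bigr)=\Theta\!\Bigl(L'(\log n)^{\frac1{1-\alpha}-\frac{\alpha}{1-\alpha}}\Bigr)=\Theta(L'\log n),
\]
where the hidden constants depend only on $\alpha,\nu,L$ and not on $L'$. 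Choosing $L'=L'(c)$ large enough that this mean exceeds $c'(c+1)\log n$, with $c'$ the constant of Lemma~\ref{lem:Poisson}, that lemma yields that with probability at least $1-n^{-(c+1)}$ the region $\Phi_j\cap(B_O(\ell)\setminus B_O(\ell-1))$ contains $\Theta(L'\log n)\ge 1$ points of $V$, all of which lie in $\layer_{\ell}$. A union bound over the $N=O(n)$ cells $\Phi_j$ then gives that, with probability $1-O(n^{-c})$, every $\Phi_j$ — and therefore every $2\phi$-sector — meets $\layer_{\ell}$, which is the assertion.

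I do not expect a genuine obstacle here; the one point that needs care is the bookkeeping in the displayed estimate, namely that the $\Theta$-constant relating $n\mu$ to $L'\log n$ is \emph{uniform} in $L'$ — it absorbs the fixed factors $e^{-\alpha L/(1-\alpha)}$ and $1-e^{-\alpha}$ once $c$, and hence $L$, has been fixed. Without this uniformity the choice of $L'$ needed to beat the threshold in Lemma~\ref{lem:Poisson} would be circular; with it, the proof reduces to that lemma, a union bound over $O(n)$ events, and the trivial observation that an arc of length $2\phi$ contains a full cell of any partition into arcs of length at most $\phi$.
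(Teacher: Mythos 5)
Your proof is correct and follows essentially the same route as the paper's: partition $B_O(R)$ into $O(n)$ sectors of width at most $\phi$, compute $\mu(\Phi_j\cap B_O(\ell)\setminus B_O(\ell-1))=\Theta(L'\log n/n)$ via Corollary~\ref{cor:medidalb}, apply Lemma~\ref{lem:Poisson} with $L'$ chosen large (after $L$), take a union bound, and observe that every $2\phi$-sector contains a full cell. Your explicit remark that the $\Theta$-constant is uniform in $L'$ is exactly the bookkeeping the paper leaves implicit; nothing further is needed.
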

\begin{proof}
Partition $B_O(R)$ into $\phi$-sectors $\Phi_1, \ldots, \Phi_{2\pi/\phi}$. 
By Corollary~\ref{cor:medidalb}, we get
\[
\mu(\Phi_i\cap B_O(\ell)\setminus B_{O}(\ell-1))
  = \Theta(\phi e^{-\alpha(R-\ell)})=\Theta(\log n/n).
\]
For $L'$ sufficiently large, the constant hidden in the asymptotic notation 
  can be made as large as required by Lemma~\ref{lem:Poisson} to get that, 
  with probability at least $1-O(n^{-(c+1)})$, the number of vertices in 
  $V_{\ell}\cap\Phi_{i}$ is $\Theta(\log n)$. 
By taking a union bound over all  $\phi$-sectors $\Phi_i$ 
  (there are $2\pi/\phi=O(n)$ of them), this holds with probability at 
  least $1-O(n^{-c})$ in all of them simultaneously. 
The statement then follows since every $2\phi$-sector $\Phi$ has to contain 
  entirely a $\phi$-sector $\Phi_i$, and by a union bound over all events.
\end{proof}
We are now ready for the upper bound on the second largest component.
\begin{proposition}\label{p:UpperBound}
Let $G=(V,E)$ be chosen according to $\poimod_{\alpha,\nu}(n)$. W.e.h.p., 
\[
L_2(G) = O(\log^{\frac{1}{1-\alpha}} n).
\]
\end{proposition}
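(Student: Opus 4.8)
The plan is to use Lemma~\ref{lem:maxAngle} to pin down, in each sufficiently wide sector, a vertex of the giant component, and then to invoke a geometric separation argument (Lemma~\ref{lem:FK15}) to conclude that any component other than the giant must be confined to the angular gap between two consecutive such ``giant witnesses''. Concretely, fix $c>0$ and let $L,L'$ and $\phi=\frac{L'}{n}(\log n)^{1/(1-\alpha)}$, $\ell=R-\frac{\log R}{1-\alpha}-\frac{L}{1-\alpha}$ be as in Lemma~\ref{lem:maxAngle}. With probability at least $1-O(n^{-c})$ every $2\phi$-sector contains a vertex of $\layer_\ell$, and (by Lemma~\ref{lem:HtoG}, enlarging $L$) every such vertex lies in the giant component. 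I would actually want this w.e.p., so I would instead choose the constants as functions of an arbitrary $c$ and note that the number of events union-bounded over is polynomial; the conclusion then holds w.e.p.\ exactly as in the statement.

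First I would set up the covering: partition $B_O(R)$ into consecutive $\phi$-sectors $\Phi_1,\dots,\Phi_{2\pi/\phi}$ (there are $O(n)$ of them), and let $\calC$ be a component of $G$ other than the giant. I claim the vertices of $\calC$ span an angular interval of length at most $2\phi$ at the origin. Suppose not; then $\calC$ contains two vertices $p,p''$ whose angular coordinates differ by more than $2\phi$ (say $\theta_p\le\theta_{p''}$), and since $\calC$ is connected there is a path in $\calC$ from $p$ to $p''$; consecutive vertices on this path are at hyperbolic distance $\le R$, hence (by Lemma~\ref{lem:aproxAngle}, or rather the elementary fact that an edge subtends angle $O(1)$ only when both endpoints are near the boundary, but in general an edge of length $\le R$ subtends angle $< \pi$) the path cannot ``jump'' over a whole $2\phi$-sector $\Phi$ lying strictly between $\theta_p$ and $\theta_{p''}$ without some vertex of the path landing in $\Phi$; wait — that is not quite the right phrasing. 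The clean way: by Lemma~\ref{lem:maxAngle} there is a giant-component vertex $v\in\layer_\ell$ whose angle lies strictly between $\theta_p$ and $\theta_{p''}$ (since the interval has length $>2\phi$, it contains some $2\phi$-sector $\Phi$, which contains such a $v$). Now apply Lemma~\ref{lem:FK15}: along the path from $p$ to $p''$ in $\calC$ there are two consecutive vertices $q,q'$ whose angular coordinates straddle $\theta_v$, i.e.\ $\theta_q\le\theta_v\le\theta_{q'}$, with $\dist(q,q')\le R$. Since $r_v=\ell\le i$ for the (wait, $v$ has radius between $\ell-1$ and $\ell$, which is large) — here I need $r_v\ge \min\{r_q,r_{q'}\}$ or the symmetric hypothesis. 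The point is that $v$ is very close to the boundary ($r_v\ge\ell$), so unless $q$ or $q'$ is even closer to the boundary, $v$ is ``outermost'' and Lemma~\ref{lem:FK15} (applied with $p\mapsto q$, $p'\mapsto v$? — no, $v$ must be the middle one). Let me reorganize: take $q,q'\in\calC$ consecutive on the path with $\theta_q\le\theta_v\le\theta_{q'}$. If $r_v\le\min\{r_q,r_{q'}\}$ is false, then actually I want the other direction. The correct statement to use is: whichever of $q,q'$ is ``between'' — but $v$ is between. I will instead use the contrapositive form of Lemma~\ref{lem:FK15} with the triple $(q,v,q')$: if $r_v\le\min\{r_q,r_{q'}\}$ then $qv$ and $vq'$ are edges, so $v\in\calC$, contradiction (as $v$ is in the giant). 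If instead $r_v>\min\{r_q,r_{q'}\}$, say $r_v>r_q$, I would need a companion observation (which I expect to extract from Lemma~\ref{lem:FK15} plus Remark~\ref{rem:monotonTheta}): a vertex $q$ with $r_q<r_v$ sitting angularly on one side of $v$ and at the boundary-near radius range still cannot ``reach across'' $v$ — more precisely, I would show $\dist(q,q')\le R$ together with $\theta_q\le\theta_v\le\theta_{q'}$ forces either $qv\in E$ or $q'v\in E$ whenever $r_v\ge\ell$ and $r_q,r_{q'}\le R$, because the angular width available is $\theta_R(r_q,r_{q'})\ge$ something that dominates $\max\{\theta_R(r_q,r_v),\theta_R(r_{q'},r_v)\}$ by monotonicity of $\theta_R(\cdot,\cdot)$ in both arguments. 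This is the crossing-edge lemma that is morally ``two edges cross $\Rightarrow$ their endpoints are pairwise adjacent to one endpoint of the other'', and it is essentially Lemma~\ref{lem:FK15}; I would spell out the needed variant.

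Granting the angular-span claim, the finish is immediate: $\calC$ lives inside a single $3\phi$-sector (two consecutive $\phi$-sectors plus slack, or just three consecutive ones). By Corollary~\ref{cor:medidalb}, such a sector $\Phi'$ has $\mu(\Phi'\cap B_O(R))=\Theta(\phi)=\Theta\big((\log n)^{1/(1-\alpha)}/n\big)=\omega(\log n/n)$, so by the second part of Lemma~\ref{lem:Poisson}, w.e.p.\ (uniformly over all $O(n)$ such sectors, by the union-bound remark) the number of vertices in $\Phi'$ is $\Theta(n\mu(\Phi'))=\Theta((\log n)^{1/(1-\alpha)})$. Hence $|\calC|\le |\Phi'\cap V|=O((\log n)^{1/(1-\alpha)})$ w.e.p., which is the asserted bound $L_2(G)=O(\log^{1/(1-\alpha)}n)$. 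I expect the main obstacle to be precisely the ``crossing edges'' step: making rigorous that a path of a non-giant component cannot angularly straddle a boundary-near giant vertex. Once the right geometric lemma (a mild strengthening/repackaging of Lemma~\ref{lem:FK15}, using Remark~\ref{rem:monotonTheta} to handle the case where the path vertices are even closer to the boundary than $v$) is in place, everything else is routine measure estimates and union bounds.
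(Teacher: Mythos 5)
Your route is the paper's route (giant witnesses in every $2\phi$-sector via Lemma~\ref{lem:maxAngle}, angular confinement of any non-giant component via Lemma~\ref{lem:FK15}, then a Poisson count via Lemma~\ref{lem:Poisson}), but the step you yourself flag as the main obstacle is a genuine gap, and the patch you sketch would not work as stated. The resolution is not a strengthened ``crossing-edge'' lemma; it is the observation that the troublesome case $r_v>\min\{r_q,r_{q'}\}$ simply cannot occur. Once you have conditioned on the event of Lemma~\ref{lem:HtoG}, \emph{every} vertex of $V\cap B_O(\ell)$ lies in the giant component, so every vertex of a non-giant component $\calC$ --- in particular every vertex on the path from $p$ to $p''$ --- has radius strictly greater than $\ell$, while your witness $v$ lies in $\layer_\ell\subseteq B_O(\ell)$, i.e.\ $r_v\le\ell$. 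Hence $r_v\le\min\{r_q,r_{q'}\}$ holds automatically, Lemma~\ref{lem:FK15} applies verbatim with $v$ as the middle point, gives $qv,q'v\in E$, and puts $v$ in $\calC$, a contradiction. This is exactly how the paper closes the argument: it picks $u,u'$ in $\calC$ of maximal angular difference and notes that the connecting path uses only vertices of radius $>\ell$.

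Moreover, the variant you propose to prove instead (``$\dist(q,q')\le R$, $\theta_q\le\theta_v\le\theta_{q'}$, $r_v\ge\ell$, $r_q,r_{q'}\le R$ forces $qv\in E$ or $q'v\in E$'') is false in that generality: take $r_q=r_{q'}$ slightly above $\frac{R}{2}$ and $r_v$ close to $R$; then $\theta_R(r_q,r_{q'})=\Theta(1)$ while $\theta_R(r_q,r_v)=o(1)$ by Lemma~\ref{lem:aproxAngle}, so $q,q'$ can be adjacent and angularly straddle $v$ with neither adjacent to $v$. The statement only becomes true under the radius condition $r_v\le\min\{r_q,r_{q'}\}$, which the conditioning on Lemma~\ref{lem:HtoG} supplies for free, so pursuing the strengthened lemma would fail while the needed fix is one line. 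A minor further point: when you count the vertices of the enclosing sector with Lemma~\ref{lem:Poisson} you are working after conditioning; either note, as the paper does, that the conditioning does not affect the vertex distribution outside $B_O(\ell)$ (and count only $\Phi'\setminus B_O(\ell)$, which suffices since $\calC$ lives there), or union-bound the unconditional w.e.p.\ count with the other events rather than literally conditioning. With these repairs your argument coincides with the paper's proof; the remaining measure estimates, the choice $\phi=\Theta((\log n)^{1/(1-\alpha)}/n)$, and the $c$-dependent choice of constants to get the w.e.p.\ conclusion are all as in the paper.
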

\begin{proof}
Let $c>0$, $L:=L(c+1)$, $\ell:=\ell(L)$, $L':=L'(c+1)$, and $\phi:=\phi(L')$ be as in the statement of Lemma~\ref{lem:maxAngle}.
By a union bound and appropriate choices of $L$ and $L'$, Lemma~\ref{lem:HtoG} and Lemma~\ref{lem:maxAngle} imply that, 
  with probability at least $1-O(n^{-(c+1)})$, all vertices in $B_{O}(\ell)$
  belong to the center component and 
  every $2\phi$-sector contains at least one vertex $v\in B_{O}(\ell)$.
Then, every vertex $x$ outside the center component 
  belongs to $B_{O}(R)\setminus B_{O}(\ell)$. 
Now, consider a component $C$ distinct from the center component and 
  let $u,u'$ be vertices in $C$ such that
  $|\theta_{u'}-\theta_{u}|=\max_{x,x'} |\theta_x-\theta_{x'}|$, where the 
  maximum is taken over all pairs of vertices $x,x'$ belonging to $C$.
If we had $|\theta_{u'}-\theta_{u}| \ge 2\phi$,
  then by our conditioning 
  there would be a vertex $v\in B_{O}(\ell)$
  (thus in the center component) such that 
  $\theta_{u} \le \theta_v \le \theta_{u'}$.
Since there exists a path in $C$ between $u$ and $u'$ containing only vertices 
  $u_j$ with $r_{u_j} > \ell$, in such a path there must be a pair of 
  vertices, say $u_i, u_j$, with 
  $r_v \le r_{u_i}, r_{u_{j}}$, $u_iu_{j} \in E$, and 
  $\theta_{u_i} \le \theta_v \le \theta_{u_{j}}$.
  By Lemma~\ref{lem:FK15}, also $u_iv \in E$ and $u_{j}v \in E$, 
  and hence $u$ and $u'$ are connected to the center component. 
Therefore, by our conditioning we may assume that 
  $|\theta_{u'}-\theta_{u}| < 2\phi$. 
Note that conditioning on the distribution of vertices inside $B_{O}(\ell)$ does not change the distribution of vertices in 
  $B_{O}(R)\setminus B_{O}(\ell)$.
Hence, since $\phi=\omega(\log n/n)$, by Lemma~\ref{lem:Poisson}, w.e.h.p.~we get $|C|=O(\phi n)=O((\log n)^{\frac{1}{1-\alpha}})$. 
By a union bound over all events, 
  with probability at least $1-O(n^{-c})$, it holds that connected components distinct from the center 
  component are of size $O((\log n)^{\frac{1}{1-\alpha}})$, and the statement follows from Proposition~\ref{prop:bfm}.
\end{proof}

\subsection{Lower bound}\label{subs:Lower}
We next turn to prove a lower bound matching the bound of Proposition~\ref{p:UpperBound}. Let $M=:M(\alpha, \nu)$ throughout this subsection be a sufficiently large constant. Partition $B_O(R)$ into $\psi$-sectors with $\psi:=(\nu/n)^{1-\beta}$ for a sufficiently small constant $\beta:=\beta(\alpha, M, \nu)$ (first, $M$ has to be chosen sufficiently large as a function of the model parameters $\alpha$ and $\nu$, independent of $\beta$, and then, $\beta$ has to be chosen small enough). 
Fix throughout this subsection 
  $\ell:=R-\frac{\log R}{1-\alpha}+\frac{M}{1-\alpha}$ 
  (recall that we suppose that $\ell$ is an integer). 
Let $\phi := 9\theta_{R}(\ell,\ell)$.
By Lemma~\ref{lem:aproxAngle} and Remark~\ref{rem:aproxAngle} thereafter,
and since $R=2\log\frac{n}{\nu}$,
\begin{equation}\label{eqn:phi}
\theta_{R}(\ell,\ell) = (2+o(1))\frac{\nu}{n}e^{R-\ell}
  = (2+o(1))\frac{\nu}{n}R^{\frac{1}{1-\alpha}}e^{-\frac{M}{1-\alpha}}.
\end{equation}
For each $\psi$-sector $\Psi$, consider the region 
  $\Upsilon_{\ell}:=\Upsilon_{\ell}(\Psi)$ consisting of those points  
  of $B_{O}(\ell)\setminus B_{O}(\ell-1)$ that belong to the 
  $\phi$-sector having the same bisector as $\Psi$. Formally, defining $\Phi_{\Psi}$ as the $\phi$-sector having the same bisector as $\Psi$, we have $\Upsilon_{\ell}=\Phi_{\Psi}\cap B_{O}(\ell)\setminus B_{O}(\ell-1)$. Next, we establish a lower bound on the probability that  
  $V\cap\Upsilon_{\ell}$ induces a connected component of $G$. 
Actually, we establish a stronger fact.
In the ensuing discussion, unless we say otherwise, the $\psi$-sector
  $\Psi$ is assumed to be given and all regions as well as subgraphs
  mentioned depend on $\Psi$.
\begin{lemma}\label{ev:B}
Let $\Upsilon'_{1},\ldots,\Upsilon'_{18}$ be a partition
  of $\Upsilon_{\ell}$ into $18$ parts, 
  each $\Upsilon'_{i}$
  obtained as the intersection of $\Upsilon_{\ell}$ and a 
  $\frac{\phi}{18}$-sector.
The following hold:
\begin{enumerate}[(i).-]
\item\label{it:B1}
Let $\calB$ be the event that $V\cap\Upsilon'_{i}$
  is non-empty for every $i=1,...,18$.
Then, $\calB$ occurs a.a.s.

\item\label{it:B2}
For sufficiently large $n$, all vertices in 
  $V\cap\Upsilon_{\ell}$ belong to the same connected component.
\end{enumerate}
\end{lemma}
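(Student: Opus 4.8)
The plan is to establish the two items in turn. Item~\ref{it:B1} reduces to a measure computation together with the Poissonian nature of $V$, while item~\ref{it:B2} is, once item~\ref{it:B1} is in hand, a purely deterministic geometric argument.

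For item~\ref{it:B1}, I would first estimate $\mu(\Upsilon'_i)$. Each $\Upsilon'_i$ is the intersection of the annulus $B_O(\ell)\setminus B_O(\ell-1)$ with a $\tfrac{\phi}{18}$-sector, and since the angular coordinate has a uniform marginal, $\mu(\Upsilon'_i)=\tfrac{\phi/18}{2\pi}\,\mu\big(B_O(\ell)\setminus B_O(\ell-1)\big)$. By Corollary~\ref{cor:medidalb} this is $\Theta\big(\phi\,e^{-\alpha(R-\ell)}\big)$; substituting $\phi=9\theta_R(\ell,\ell)$ with the estimate~\eqref{eqn:phi}, and using $R-\ell=\tfrac{\log R-M}{1-\alpha}$, gives $\mu(\Upsilon'_i)=\Theta\big(e^{(1-\alpha)(R-\ell)}/n\big)=\Theta\big(R\,e^{-M}/n\big)=\Theta(\log n/n)$. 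Hence $|V\cap\Upsilon'_i|$ is Poisson with mean $n\mu(\Upsilon'_i)=\Theta(\log n)\to\infty$, so $\Pr[V\cap\Upsilon'_i=\emptyset]=e^{-n\mu(\Upsilon'_i)}=n^{-\Omega(1)}$ (alternatively, invoke Lemma~\ref{lem:Poisson}). A union bound over the $18$ parts then gives $\Pr[\overline{\calB}]=o(1)$, which is item~\ref{it:B1}.

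For item~\ref{it:B2}, the key geometric observation is that, for $n$ large, any two points $u,w\in B_O(\ell)\setminus B_O(\ell-1)$ whose angular distance is at most $\theta_R(\ell,\ell)$ are adjacent in $G$. Indeed, fixing $r_u,r_w\in[\ell-1,\ell]$, the right-hand side of~\eqref{eqn:coshLaw} is nondecreasing in the angular distance between $u$ and $w$, so $\dist(u,w)\le R$ as soon as that angular distance is at most $\theta_R(r_u,r_w)$, the angle at which the distance equals $R$; this angle is well defined as a real number because $|r_u-r_w|\le R\le r_u+r_w$, the last inequality holding for large $n$ since $r_u+r_w\ge 2(\ell-1)\ge R$. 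As $r_u,r_w\le\ell$, Remark~\ref{rem:monotonTheta} yields $\theta_R(r_u,r_w)\ge\theta_R(\ell,\ell)$, proving the observation. Now each $\Upsilon'_i$ subtends an angle $\tfrac{\phi}{18}=\tfrac12\theta_R(\ell,\ell)$ at the origin (relabelling the $18$ parts, if necessary, so that they are in angular order), so any two vertices lying in the same part, or in two consecutive parts, have angular distance at most $\theta_R(\ell,\ell)$ and are therefore adjacent. Conditioning on $\calB$, choose $v_i\in V\cap\Upsilon'_i$ for each $i$; then $v_iv_{i+1}\in E$ for $1\le i\le 17$, so $v_1,\dots,v_{18}$ all lie in one connected component $\calC$ of $G$, and since every vertex of $V\cap\Upsilon_\ell$ lies in some $\Upsilon'_i$ and is thus adjacent to $v_i\in\calC$, all of $V\cap\Upsilon_\ell$ lies in $\calC$, which is item~\ref{it:B2}.

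The only step needing genuine care is the geometry behind item~\ref{it:B2}: one must verify that $\theta_R(r_u,r_w)$ is well defined (this is exactly where the placement of $\ell$, namely $2(\ell-1)\ge R$ for $n$ large, is used) and note that the constants $9$ and $18$ have been tuned so that $2\cdot\tfrac{\phi}{18}$ equals precisely $\theta_R(\ell,\ell)$; everything else is routine bookkeeping with the measure estimates and the Poisson structure.
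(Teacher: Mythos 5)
Your proof is correct and follows essentially the same route as the paper: a measure estimate for each $\Upsilon'_i$ (of order $\log n/n$, so each part is nonempty a.a.s.) for part (i), and for part (ii) the observation that band vertices at angular distance at most $\theta_R(\ell,\ell)$ are adjacent via Remark~\ref{rem:monotonTheta}, so vertices in the same or consecutive $\tfrac{\phi}{18}$-parts are joined, and the nonempty parts chain everything into one component. Your extra check that $\theta_R(r_u,r_w)$ is well defined (using $2(\ell-1)\ge R$) is a welcome detail the paper leaves implicit, and your union bound in place of the paper's exact independence computation changes nothing essential.
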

\begin{proof}
To prove~\eqref{it:B1}, observe that by our choice of $\phi$,
  Corollary~\ref{cor:medidalb}, and~\eqref{eqn:phi}, for each $i$,
\begin{align*}
\mu(\Upsilon'_{i}) 
  & = \frac12\theta_{R}(\ell,\ell)\mu(B_{O}(\ell)\setminus B_{O}(\ell-1))
  = (1+o(1))(1-e^{-\alpha})\frac{\nu}{n}e^{(1-\alpha)(R-\ell)}.
\end{align*}
Clearly, the events $V\cap\Upsilon'_{1}\neq\emptyset,...,
  V\cap\Upsilon'_{18}\neq\emptyset$ are independent. 
Hence, by our choice of $\ell$, 
\begin{equation*}\label{evB}
\Pr{(\mathcal{B})} 
  =(1-e^{-\nu(1+o(1))(1-e^{-\alpha})e^{(1-\alpha)(R-\ell)}})^{18}=1+o(1).
\end{equation*}
To prove~\eqref{it:B2}, note that (by Remark~\ref{rem:monotonTheta}) two vertices in $B_{O}(\ell)\setminus B_{O}(\ell-1)$ (and thus the same holds for vertices in $\Upsilon_{\ell}\cap B_{O}(\ell)\setminus B_{O}(\ell-1)$) are neighbors if they form an 
  angle at the origin of at most $\theta_{R}(\ell,\ell)$.
Thus, every vertex in $\Upsilon'_{i}$ is connected 
  by an edge to every vertex in 
  $\Upsilon'_{i-1}\cup\Upsilon'_{i}\cup\Upsilon'_{i+1}$, 
  since the maximal angle such pairs of vertices form is,
  by our choice of $\phi$, at most 
  $2\frac{\phi}{18}=\theta_{R}(\ell,\ell)$. Because of~\eqref{it:B1}, we get that all vertices in
     $V\cap\Upsilon_{\ell}$ must be connected.
\end{proof}

Henceforth, for two points $p,p'\in B_{O}(R)$ let $\Delta\phi_{p,p'}$ denote 
  the (smaller) angle in $[0,\pi)$ between $p$ and $p'$ formed at the origin, i.e., $\Delta\phi_{p,p'}:=\min\{|\theta_p-\theta_{p'}|,|2\pi-\theta_p+\theta_{p'}|\}$. By definition of $\theta_R(\cdot,\cdot)$, we know that 
  $\dist(p,p')\leq R$ if and only if 
  $\Delta\phi_{p,p'}\leq\theta_R(r_p,r_{p'})$.
Now, for $i\in\{0,\ldots,R-\ell\}$, 
  let $\Upsilon_{\ell+i}$ be the collection of points in 
  $B_{O}(\ell+i)\setminus B_{O}(\ell+i-1)$ that belong to the $(2\upsilon_{\ell+i})$-sector
  with the same bisector as $\Psi$ where
\[
\upsilon_{\ell+i}:=\frac{\phi}{2}+\sum_{j=0}^{i-1}\theta_R(\ell-1+j,\ell+j).
\]
(Note that the preceding definition of $\Upsilon_{\ell}$ is consistent
  with the one given before Lemma~\ref{ev:B}.)

Similarly, 
  for $i\in\{0,\ldots,R-\ell\}$, 
  let $\Xi_{\ell+i}$ be the collection of points in 
  $B_{O}(\ell+i)\setminus B_{O}(\ell+i-1)$ that belong to the 
  $(2\xi_{\ell+i})$-sector
  with the same bisector as $\Psi$ where
\[
\xi_{\ell+i}:=\theta_{R}(\ell-1+i,\ell-1+i)+\frac{\phi}{2}+\xi,
\]
and $\displaystyle\xi:=\sum_{j=0}^{R-\ell-1}\theta_R(\ell-1+j,\ell+j)$.

Finally, let $\displaystyle\Xi
     := \bigcup_{i=0}^{R-\ell} \Xi_{\ell+i}$
  and $\displaystyle\Upsilon
     := \bigcup_{i=0}^{R-\ell}\Upsilon_{\ell+i}$
  (see Figure~\ref{fig:Xi}).
Clearly, $\Upsilon\subseteq\Xi$.

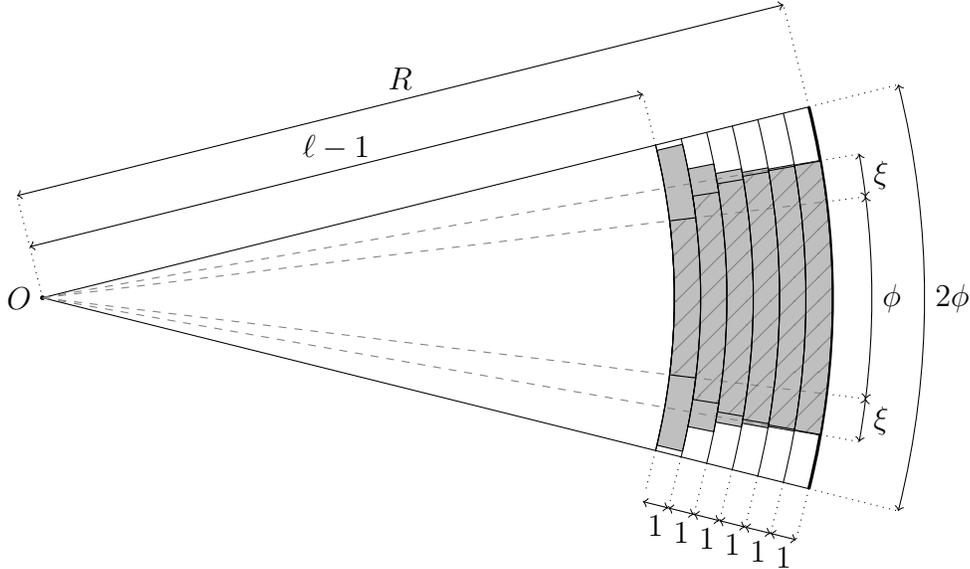
\begin{figure}[htbp]
\centering
\begin{tikzpicture}[scale=0.7]
\pgfmathsetmacro{\angXi}{10};
\pgfmathsetmacro{\angIn}{7};
\pgfmathsetmacro{\angOut}{14};
\pgfmathsetmacro{\rout}{15};
\pgfmathsetmacro{\rin}{12};
\node [left] at (0,0) {$O$};
\draw [fill] (0,0) circle (1 pt);

\draw (0,0) -- (-\angOut:\rout);
\draw (0,0) -- (\angOut:\rout);
\draw [very thick] (0,0) ++(-\angOut:\rout) arc (-\angOut:\angOut:\rout);
\draw (0,0) ++(-\angOut:\rin) arc (-\angOut:\angOut:\rin);

\draw [fill=gray!50] (0,0) ++(-13.5:\rin) arc (-13.5:13.5:\rin) -- (13.5:\rin+0.5) arc (13.5:-13.5:\rin+0.5) -- cycle;
\draw [fill=gray!50] (0,0) ++(-11.29:\rin+0.5) arc (-11.29:11.29:\rin+0.5) -- (11.29:\rin+1.0) arc (11.29:-11.29:\rin+1.0) -- cycle;
\draw [fill=gray!50] (0,0) ++(-10.47:\rin+1.0) arc (-10.47:10.47:\rin+1.0) -- (10.47:\rin+1.5) arc (10.47:-10.47:\rin+1.5) -- cycle;
\draw [fill=gray!50] (0,0) ++(-10.17:\rin+1.5) arc (-10.17:10.17:\rin+1.5) -- (10.17:\rin+2.0) arc (10.17:-10.17:\rin+2.0) -- cycle;
\draw [fill=gray!50] (0,0) ++(-10.06:\rin+2.0) arc (-10.06:10.06:\rin+2.0) -- (10.06:\rin+2.5) arc (10.06:-10.06:\rin+2.5) -- cycle;
\draw [fill=gray!50] (0,0) ++(-10.02:\rin+2.5) arc (-10.02:10.02:\rin+2.5) -- (10.02:\rin+3.0) arc (10.02:-10.02:\rin+3.0) -- cycle;

\draw [dashed,gray] (0,0) -- (-\angXi:\rout);
\draw [dashed,gray] (0,0) -- (\angXi:\rout);
\draw [dashed,gray] (0,0) -- (-\angIn:\rout);
\draw [dashed,gray] (0,0) -- (\angIn:\rout);

\draw [dotted] (-\angXi:\rout) -- (-\angXi:\rout+0.75);
\draw [dotted] (\angXi:\rout) -- (\angXi:\rout+0.75);
\draw [dotted] (-\angIn:\rout) -- (-\angIn:\rout+0.75);
\draw [dotted] (\angIn:\rout) -- (\angIn:\rout+0.75);
\draw [dotted] (-\angOut:\rout) -- (-\angOut:\rout+1.75);
\draw [dotted] (\angOut:\rout) -- (\angOut:\rout+1.75);
\draw [<->] (0,0) ++(-\angIn:\rout+0.75) arc (-\angIn:\angIn:\rout+0.75) node [midway,right] {$\phi$};
\draw [<->] (0,0) ++(-\angOut:\rout+1.75) arc (-\angOut:\angOut:\rout+1.75) node [midway,right] {$2\phi$};
\draw [<->] (0,0) ++(-\angIn:\rout+0.75) arc (-\angIn:-\angXi:\rout+0.75) node [midway,right] {$\xi$};
\draw [<->] (0,0) ++(\angIn:\rout+0.75) arc (\angIn:\angXi:\rout+0.75) node [midway,right] {$\xi$};

\draw [dotted] (\angOut+90:0) -- (\angOut+90:2.0);
\draw [dotted] (\angOut:\rout) -- ++(\angOut+90:2.0);
\draw [dotted] (\angOut:\rin) -- ++(\angOut+90:1.0);
\draw [<->] (\angOut+90:1.0) -- ++(\angOut:\rin) node [midway,above] {$\ell-1$};
\draw [<->] (\angOut+90:2.0) -- ++(\angOut:\rout) node [midway,above] {$R$};
\draw [dotted] (-\angOut:\rin) -- ++(-\angOut-90:0.75);
\draw [<->] (-\angOut-90:1.0) ++(-\angOut:\rin) -- ++(-\angOut:0.5) node [midway,below] {$1$};
\draw [<->] (-\angOut-90:1.0) ++(-\angOut:\rin+0.5) -- ++(-\angOut:0.5) node [midway,below] {$1$};
\draw [<->] (-\angOut-90:1.0) ++(-\angOut:\rin+1.0) -- ++(-\angOut:0.5) node [midway,below] {$1$};
\draw [<->] (-\angOut-90:1.0) ++(-\angOut:\rin+1.5) -- ++(-\angOut:0.5) node [midway,below] {$1$};
\draw [<->] (-\angOut-90:1.0) ++(-\angOut:\rin+2.0) -- ++(-\angOut:0.5) node [midway,below] {$1$};
\draw [<->] (-\angOut-90:1.0) ++(-\angOut:\rin+2.5) -- ++(-\angOut:0.5) node [midway,below] {$1$};
\draw [dotted] (-\angOut:\rin+0.5) -- ++(-\angOut-90:1.0);
\draw [dotted] (-\angOut:\rin+1.0) -- ++(-\angOut-90:1.0);
\draw [dotted] (-\angOut:\rin+1.5) -- ++(-\angOut-90:1.0);
\draw [dotted] (-\angOut:\rin+2.0) -- ++(-\angOut-90:1.0);
\draw [dotted] (-\angOut:\rin+2.5) -- ++(-\angOut-90:1.0);
\draw [dotted] (-\angOut:\rin+3.0) -- ++(-\angOut-90:1.0);

\pgfdeclarepatternformonly{ltrdiagonals}%
{\pgfqpoint{-1pt}{-1pt}}{\pgfqpoint{10pt}{10pt}}%
{\pgfqpoint{9pt}{9pt}}{\pgfsetlinewidth{0.4pt}\pgfpathmoveto{\pgfqpoint{0pt}{0pt}}%
\pgfpathlineto{\pgfqpoint{9.1pt}{9.1pt}}%
\pgfusepath{stroke}}%
\draw[
      fill opacity=0.64,
      postaction={fill,pattern=ltrdiagonals,pattern color=black!84}
     ] (0,0) ++(-\angIn:\rin) arc (-\angIn:\angIn:\rin) -- (\angIn:\rin+0.5) arc (\angIn:8.89:\rin+0.5) -- (8.89:\rin+1.0) arc (8.89:9.6:\rin+1.0) -- (9.6:\rin+1.5) arc (9.6:9.85:\rin+1.5) -- (9.85:\rin+2.0) arc (9.85:9.94:\rin+2.0) -- (9.94:\rin+2.5) arc (9.94:9.98:\rin+2.5) -- (9.98:\rout) arc (9.98:-9.98:\rout) -- (-9.98:\rin+2.5) arc (-9.98:-9.94:\rin+2.5) -- (-9.94:\rin+2.0) arc (-9.94:-9.85:\rin+2.0) -- (-9.85:\rin+1.5) arc (-9.85:-9.6:\rin+1.5) -- (-9.6:\rin+1.0) arc (-9.6:-8.89:\rin+1.0) -- (-8.89:\rin+0.5) arc (-8.89:-\angIn:\rin+0.5) -- cycle;

\draw (0,0) ++(-\angOut:\rin+0.0) arc (-\angOut:\angOut:\rin+0.0);
\draw (0,0) ++(-\angOut:\rin+0.5) arc (-\angOut:\angOut:\rin+0.5);
\draw (0,0) ++(-\angOut:\rin+1.0) arc (-\angOut:\angOut:\rin+1.0);
\draw (0,0) ++(-\angOut:\rin+1.5) arc (-\angOut:\angOut:\rin+1.5);
\draw (0,0) ++(-\angOut:\rin+2.0) arc (-\angOut:\angOut:\rin+2.0);
\draw (0,0) ++(-\angOut:\rin+2.5) arc (-\angOut:\angOut:\rin+2.5);
\end{tikzpicture}
\caption{Region $\Xi$ is shown shaded in gray and
  region $\Upsilon$ diagonally hatched (not to scale). The two shaded non-hatched regions correspond to walls provided they do not contain vertices.}\label{fig:Xi}
\end{figure}
Note that $\Xi\setminus\Upsilon$ is comprised of two similar
  connected geometric regions of $B_{O}(R)$.
  Denote by $\calW$ either one of them.
  We say that a region $\calW'\subseteq B_{O}(R)$ that is obtained by a rotation
  around the origin of $\calW$ is a \emph{wall} if it does not contain
  any element of $V$, i.e., $V\cap\calW'=\emptyset$. 

Next, we establish several facts concerning regions $\Xi$ and $\Upsilon$,
  but first we bound $\xi$ just defined.
By Lemma~\ref{lem:aproxAngle}, 
  the formula for the sum of a geometric series,
  since $R=2\log\frac{n}{\nu}$, and by our choice of $\ell$ 
\begin{equation}\label{eqn:xi}
\xi 
  = (2+o(1))\frac{\nu}{n}e^{R-\ell+\frac12}\sum_{j=0}^{R-\ell-1}e^{-j}
  = (2+o(1))\frac{e^{3/2}}{e-1}\frac{\nu}{n}e^{R-\ell}.
\end{equation}
Since $e^{3/2}/(e-1)< 3$, by~\eqref{eqn:phi},
  and our choice of $\phi$, for sufficiently large $n$, 
\begin{equation}\label{eqn:phiXi}
\xi < 3\theta_R(\ell,\ell) = \frac{1}{3}\phi. 
\end{equation}

Let $\calC$ be the event that there is no vertex in $\Xi\setminus\Upsilon$.
The next lemma says that $\Xi\setminus\Upsilon$ contains, with a not too small probability,
  two disjoint walls (one in each
  side of the bisector of $\Upsilon$).
\begin{lemma}\label{ev:C}
For sufficiently large $n$, the following hold:
\begin{enumerate}[(i).-]
\item\label{it:C1}
For some constant $C_0=C_0(\alpha)$ depending only on $\alpha$, the probability that $\calC$ occurs is at least $e^{-C_{0}\nu Re^{-M}}$.

\item\label{it:C2}
If $p\in (B_{O}(R)\setminus B_{O}(\ell-1))\setminus \Xi$ 
  and $p'\in\Upsilon$, then 
  $\dist(p,p')> R$.
\end{enumerate}
\end{lemma}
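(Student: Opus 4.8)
The plan is to treat the two parts separately; both reduce to the definitions together with the estimates already collected.

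\textbf{Part~\eqref{it:C1}.} Since the number of points of $V$ in any region is Poisson with mean $n$ times its measure, $\Pr(\calC)=e^{-n\mu(\Xi\setminus\Upsilon)}$, so it suffices to establish $n\mu(\Xi\setminus\Upsilon)\le C_0\nu Re^{-M}$ with $C_0=C_0(\alpha)$. Up to a null set, $\Xi\setminus\Upsilon$ is the disjoint union over $i\in\{0,\ldots,R-\ell\}$ of $\Xi_{\ell+i}\setminus\Upsilon_{\ell+i}$, each of which is the band $B_O(\ell+i)\setminus B_O(\ell+i-1)$ intersected with an angular region of total width $2(\xi_{\ell+i}-\upsilon_{\ell+i})$; telescoping the defining sums gives
\[
\xi_{\ell+i}-\upsilon_{\ell+i}=\theta_R(\ell-1+i,\ell-1+i)+\sum_{j=i}^{R-\ell-1}\theta_R(\ell-1+j,\ell+j).
\]
Substituting Lemma~\ref{lem:aproxAngle}, summing the geometric series exactly as in the derivation of~\eqref{eqn:xi}, and using $e^{(R-2\ell)/2}=(1+o(1))\tfrac{\nu}{n}e^{R-\ell}$ (cf.~\eqref{eqn:phi}) yields $\xi_{\ell+i}-\upsilon_{\ell+i}=\Theta(\tfrac{\nu}{n}e^{R-\ell}e^{-i})$. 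As the angular coordinate is uniform, $\mu(\Xi_{\ell+i}\setminus\Upsilon_{\ell+i})=\tfrac{1}{\pi}(\xi_{\ell+i}-\upsilon_{\ell+i})\,\mu(B_O(\ell+i)\setminus B_O(\ell+i-1))$, and by Corollary~\ref{cor:medidalb} the annulus has measure $\Theta(e^{-\alpha(R-\ell-i)})$. Multiplying the two and inserting $R-\ell=\frac{\log R-M}{1-\alpha}$, which makes $e^{(1-\alpha)(R-\ell)}=Re^{-M}$, the powers of $R$ and the factors $e^{\pm M/(1-\alpha)}$ cancel and leave $\mu(\Xi_{\ell+i}\setminus\Upsilon_{\ell+i})=\Theta(\tfrac{\nu}{n}Re^{-M}e^{-(1-\alpha)i})$; summing over $i$ the geometric series with ratio $e^{-(1-\alpha)}<1$ gives $n\mu(\Xi\setminus\Upsilon)=\Theta(\nu Re^{-M})$, with the hidden constant depending only on $\alpha$.

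\textbf{Part~\eqref{it:C2}.} Let $p$ lie in the band $B_O(\ell+a)\setminus B_O(\ell+a-1)$ (legitimate since $r_p>\ell-1$) and $p'\in\Upsilon_{\ell+b}$, where $a,b\in\{0,\ldots,R-\ell\}$. Because $\Xi_{\ell+a}$ is the only part of $\Xi$ that meets $p$'s band, $p\notin\Xi$ forces the angular distance of $p$ to the bisector of $\Psi$ to exceed $\xi_{\ell+a}$, while $p'$ lies within angular distance $\upsilon_{\ell+b}$ of that bisector; the triangle inequality for the circular angular metric then gives $\Delta\phi_{p,p'}>\xi_{\ell+a}-\upsilon_{\ell+b}$. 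Since $r_p\ge\ell+a-1$, $r_{p'}\ge\ell+b-1$ and $r_p+r_{p'}\ge2(\ell-1)>R$ for $n$ large (so that $\theta_R$ is well defined here), Remark~\ref{rem:monotonTheta} yields $\theta_R(r_p,r_{p'})\le\theta_R(\ell+a-1,\ell+b-1)$. Thus, by the characterization $\dist(p,p')\le R\iff\Delta\phi_{p,p'}\le\theta_R(r_p,r_{p'})$, it is enough to show
\[
\xi_{\ell+a}-\upsilon_{\ell+b}\ \ge\ \theta_R(\ell+a-1,\ell+b-1).
\]
Telescoping as before, the left side equals $\theta_R(\ell-1+a,\ell-1+a)+\sum_{j=b}^{R-\ell-1}\theta_R(\ell-1+j,\ell+j)$. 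If $a\le b$ I would discard the sum and apply Remark~\ref{rem:monotonTheta} as $\theta_R(\ell-1+a,\ell-1+a)\ge\theta_R(\ell-1+a,\ell-1+b)=\theta_R(\ell+a-1,\ell+b-1)$; if $a>b$ (so $b\le a-1\le R-\ell-1$) I would keep only the $j=b$ term and use $\theta_R(\ell-1+b,\ell+b)=\theta_R(\ell+b,\ell+b-1)\ge\theta_R(\ell+a-1,\ell+b-1)$, again by Remark~\ref{rem:monotonTheta}.

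The main obstacle is purely the bookkeeping in Part~\eqref{it:C1}: one must carry the three factors $e^{(R-2\ell)/2}$, $e^{-i}$ and $e^{-\alpha(R-\ell-i)}$ and verify that after substituting the value of $\ell$ all powers of $R$ and all factors $e^{\pm M/(1-\alpha)}$ cancel to leave exactly $Re^{-M}$, so that the $i$-sum converges; Part~\eqref{it:C2} needs nothing beyond the definitions and the monotonicity of $\theta_R$ recorded in Remark~\ref{rem:monotonTheta}.
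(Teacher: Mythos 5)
Your proposal is correct. Part~\eqref{it:C1} follows the paper's own route essentially verbatim: you decompose $\Xi\setminus\Upsilon$ band by band, bound the angular width $\xi_{\ell+i}-\upsilon_{\ell+i}$ via Lemma~\ref{lem:aproxAngle} and the geometric series, multiply by the band measure from Corollary~\ref{cor:medidalb}, use $e^{(1-\alpha)(R-\ell)}=Re^{-M}$, sum the resulting series with ratio $e^{-(1-\alpha)}$, and finish with the Poisson void probability; the only cosmetic difference is the exact $1/\pi$ normalization of the angular factor versus the paper's cruder (but still valid for an upper bound) constant, which is absorbed into $C_0(\alpha)$ either way.

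Part~\eqref{it:C2} is where you genuinely diverge, and your argument is cleaner. The paper also reduces to showing $\xi_{\ell+a}-\upsilon_{\ell+b}\ge\theta_R(\ell+a-1,\ell+b-1)$ (with the same telescoped left-hand side and the same use of Remark~\ref{rem:monotonTheta} to replace $\theta_R(r_p,r_{p'})$), but it then approximates both sides through Lemma~\ref{lem:aproxAngle}, splits into the cases $i'<R-\ell$ and $i'=R-\ell$, and verifies a numerical inequality involving $\sqrt{e}(1-e^{-1})/(e-1)\approx 0.61$ via a Jensen/AM--GM step, valid only for $n$ sufficiently large (uniformly in $i,i'$). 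You instead compare termwise using only the symmetry and coordinatewise monotonicity of $\theta_R$: when $a\le b$ the diagonal term $\theta_R(\ell-1+a,\ell-1+a)$ already dominates $\theta_R(\ell-1+a,\ell-1+b)$, and when $a>b$ the single summand $j=b$, rewritten as $\theta_R(\ell+b,\ell+b-1)$, dominates $\theta_R(\ell+a-1,\ell+b-1)$ since $\ell+b\le\ell+a-1$ (and your observation $b\le a-1\le R-\ell-1$ guarantees that summand exists). This yields the inequality exactly, for every $n$, with no asymptotic expansion and no constant-chasing, at the modest price of not producing the explicit asymptotic size of the angular gap that the paper's computation incidentally records but never reuses. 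Your bookkeeping of the strict inequality ($\Delta\phi_{p,p'}>\xi_{\ell+a}-\upsilon_{\ell+b}$ from $p\notin\Xi$, via the triangle inequality for the angular metric) is also in order.
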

\begin{proof}
To prove~\eqref{it:C1}, observe that by  Lemma~\ref{lem:aproxAngle}, Corollary~\ref{cor:medidalb},
  by definition of $\Xi_{\ell+i}$, the formula
  for the sum of a geometric series and since $\sqrt{e}/(e-1) < 1$,
\begin{align*}
& \mu(\Xi_{\ell+i}\setminus\Upsilon_{\ell+i}) 
  = 2(\xi_{\ell+i}-\upsilon_{\ell+i})
      \mu(B_{O}(\ell+i)\setminus B_{O}(\ell-1+i)) \\
  & \quad = 2
  \Big((2e+o(1))\frac{\nu}{n}e^{R-\ell-i}
  +(2\sqrt{e}+o(1))\frac{\nu}{n}e^{R-\ell}\sum_{j=i}^{R-\ell-1}e^{-j}\Big)
  (1-e^{-\alpha})e^{-\alpha(R-\ell-i)} \\
  & \quad \leq  (8e+o(1))(1-e^{-\alpha})\frac{\nu}{n}e^{(1-\alpha)(R-\ell-i)}.
\end{align*}
Hence, again by the formula for the sum of a geometric series and our choice of $\ell$,
\[
\mu(\Xi\setminus\Upsilon)\leq \frac{(8e+o(1))(1-e^{-\alpha})}{1-e^{-(1-\alpha)}}\frac{\nu}{n} e^{(1-\alpha)(R-\ell)}
  (1-e^{-(1-\alpha)(R-\ell+1)})
  <  C_0\frac{\nu}{n}Re^{-M},
\]
where $C_0$ is a constant depending only on $\alpha$. The sought after lower bound on the probability that 
  $V\cap\Xi\setminus\Upsilon$ is empty follows immediately.
  
Next, consider~\eqref{it:C2}.
To prove that $\dist(p,p')>R$ it suffices to show that 
  $\Delta\phi_{p,p'}>\theta_{R}(r_p,r_{p'})$.
Assume $p\in (B_{O}(\ell+i)\setminus B_{O}(\ell+i-1))\setminus\Xi$
  and $p'\in \Upsilon_{\ell+i'}$.
Hence, 
\begin{align*}
\Delta\phi_{p,p'} & >\xi_{\ell+i}-\upsilon_{\ell+i'}
  = \theta_{R}(\ell-1+i,\ell-1+i)+\sum_{j=i'}^{R-\ell-1}\theta_R(\ell-1+j,\ell+j)
\\
  & 
  = (2e+o(1))\frac{\nu}{n}e^{R-\ell}\big(e^{-i}
   + e^{-i'}\frac{\sqrt{e}}{e-1}(1-e^{-(R-\ell-i')})\big),
\end{align*}
where the last equality follows from Lemma~\ref{lem:aproxAngle},
  since $R=2\log\frac{n}{\nu}$, and the formula for the sum of a geometric 
  series.
If $i' < R-\ell$, then $e^{-(R-\ell-i')} \leq e^{-1}$, and 
  since $\sqrt{e}(1-e^{-1})/(e-1)\approx 0.61$, 
  applying Jensen's inequality we obtain that 
  for sufficiently large $n$, 
\[
\Delta\phi_{p,p'} 
  > (2e+o(1))\frac{\nu}{n}e^{R-\ell}e^{-\frac12(i+i')}= \theta_{R}(\ell-1+i,\ell-1+i').
\]
If $i'=R-\ell$, then $e^{-i} \ge e^{-\frac12(i+i')}$ and $e^{-(R-\ell-i')}=1$, 
  so by Remark~\ref{rem:monotonTheta},
\[
\Delta\phi_{p,p'} >  \theta_{R}(\ell-1+i,\ell-1+i) \ge \theta_{R}(\ell-1+i, \ell-1+i').
\] 

Now, by Remark~\ref{rem:monotonTheta}, Lemma~\ref{lem:aproxAngle},
and again since $R=2\log\frac{n}{\nu}$,
\[
\theta_{R}(r_p,r_{p'})\leq\theta_{R}(\ell-1+i,\ell-1+i').
\]
The last three displayed bounds imply that,
  for a sufficiently large $n$ (independent of $i$ and $i'$), we have 
  $\Delta\phi_{p,p'}> \theta_{R}(r_p,r_{p'})$ as claimed.  
\end{proof}

We stress that Lemma~\ref{ev:C} part (\ref{it:C2}) corresponds exactly to the second property satisfied by walls as described in Section~\ref{sec:intro}.

For a given $\Psi$, let $H$ be the subgraph of $G$ induced by 
  $V\cap\Upsilon$, where $\Upsilon=\Upsilon(\Psi)$, 
  and denote by $C(\Upsilon)$ the collection of vertices of the connected components 
  of $H$ that contain at least one vertex in $V\cap\Upsilon_\ell$. 

Let $\calG$ be the event that $|C(\Upsilon)|=\Omega((\log n)^{\frac{1}{1-\alpha}})$.
Thus, by definition, $\calG$ depends only on what happens inside
  $\Upsilon$.
\begin{lemma}\label{ev:G}
The event $\calG$ occurs a.a.s.
\end{lemma}
\begin{proof}
Let $\eta=\eta(\alpha, \nu)$ be a sufficiently large constant, let 
  $\Phi$ be the $\frac{\phi}{3}$-sector 
  with the same 
  bisector as $\Psi$, and let $\ell':=R-\frac{c\log R}{1-\alpha}$ for some 
  small constant $0 < c < 1$. 
For each vertex $z \in V_{R-\eta}\cap\Phi$, 
let $X_z$ be the indicator random variable for the event that there is a path 
  $z=z_{R-\eta},\ldots, z_{\ell'}$ in $G$ so that $z_i\in V_{i}:=V\cap B_{O}(i)\setminus B_{O}(i-1)$
  for every $i$.

We claim that for a sufficiently large $n$, there is a $\delta>0$ such that if
  $z\in V_{R-\eta}\cap\Phi$, then the expected value of $X_z$ is 
  at least $\delta$.
Indeed, suppose that for some $i$ we found a path until $z_{i+1}$. 
By Lemma~\ref{lem:aproxAngle}, Remark~\ref{rem:monotonTheta},
  and Corollary~\ref{cor:medidalb}, 
  the region $\calR\subseteq B_{O}(i)\setminus B_{O}(i-1)$ 
  in which the next  
  vertex $z_{i}$ with the desired properties can be found satisfies
\begin{equation}\label{measureB}
\mu(\calR) 
  \ge \theta_R(i+1,i)\mu(B_{O}(i)\setminus B_{O}(i-1))
  = (2+o(1))(1-e^{-\alpha})\frac{\nu}{n}e^{(1-\alpha)(R-i)-\frac12},
\end{equation}
and hence, with probability at most 
  $e^{-(2+o(1))\nu (1-e^{-\alpha})e^{(1-\alpha)(R-i)-\frac12}}$ no such vertex is found.
Thus, for some positive constant $\delta > 0$, assuming 
  $\eta$ was chosen sufficiently large (and also $n$ sufficiently large),
\begin{equation}\label{ev:Pz}
\Ex{X_z}
  \ge 1-\sum_{i=\ell'}^{R-\eta-1} e^{-(2+o(1))\nu (1-e^{-\alpha}) e^{(1-\alpha)(R-i)-\frac12}} 
  \ge \delta.
\end{equation}

Now, let $X:=\sum_{z}X_z$ where the summation is over the $z$'s in
  $V_{R-\eta}\cap\Phi$.
We claim that $X=(1+o(1))\Ex{X}$ a.a.s.
Indeed, by Lemma~\ref{lem:aproxAngle}, Corollary~\ref{cor:medidalb},
  and~\eqref{eqn:phi}, 
  we have $\mu(\Phi\cap B_{O}(R-\eta)\setminus B_{O}(R-\eta-1))
  =\Theta(\frac{1}{n}R^{\frac{1}{1-\alpha}})$.
Thus, by Lemma~\ref{lem:Poisson}, for $\eta$ large enough, w.e.h.p., $|V_{R-\eta}\cap\Phi|
  =\Theta((\log n)^{\frac{1}{1-\alpha}})$, and hence  by~\eqref{ev:Pz}, 
  $\Ex{X}=\Theta((\log n)^{\frac{1}{1-\alpha}})$. 
Moreover, in case there is a path 
  $z=z_{R-\eta},\ldots, z_{\ell'}$ in $G$ so that $z_i\in V_{i}$
  for every $i$, the total angle between 
  $z$ and $z_{\ell'}$ is 
\[
\Delta\phi_{z,z_{\ell'}}
  \leq\sum_{i=\ell'}^{R-\eta-1}\Delta\phi_{z_i,z_{i+1}}
  \leq\sum_{i=\ell'}^{R-\eta-1}\theta_{R}(i-1,i)
  = O\Big(\frac{\nu}{n}e^{R-\ell'}\Big)
  = O\Big(\frac{\nu}{n}R^{\frac{c}{1-\alpha}}\Big)=o(\phi).
\] 
Hence, if two such vertices $z, z'\in V_{R-\eta}\cap\Phi$ are at an 
  angle $\omega\big(\frac{1}{n}(\log n)^{\frac{c}{1-\alpha}}\big)$, then 
  $X_z$ and $X_{z'}$ are independent. 
Since $c < 1$, most pairs of vertices are at angular distance 
  $\omega\big(\frac{1}{n}(\log n)^{\frac{c}{1-\alpha}}\big)$, and thus 
  $\Ex{(X^2)}=(1+o(1))(\Ex{X})^2$, so by Chebyshev's 
  inequality, a.a.s.~$X=(1+o(1))\Ex{X}$ as claimed.

By the preceding discussion, in order to conclude
  that a.a.s.~$|C(\Upsilon)|=(1+o(1))\Ex{X}=  
  \Omega((\log n)^{\frac{1}{1-\alpha}})$ it is enough 
  to show that a.a.s.~the
  following event occurs: for every vertex $z$ in 
  $V_{\ell'}\cap\Phi$ there exists a path 
  $z=z_{\ell'}\ldots z_{\ell}$ in $G$ with $z_i\in V_{i}$. This fact follows observing that similar calculations as the ones performed above
  to estimate $|V_{R-\eta}\cap\Phi|$ yield that w.e.h.p.~$|V_{\ell'}\cap\Phi|
  =O((\log n)^{\frac{1}{1-\alpha}})$.
By calculations as in~\eqref{measureB} together with a union bound, 
  the desired event does not occur with probability
\begin{align*}
O((\log n)^{\frac{1}{1-\alpha}}e^{-\log^c n})
+\Pr{\big(|V_{\ell'}\cap\Phi| = \omega((\log n)^{\frac{1}{1-\alpha}})\big)}
  & = e^{\Theta(\log \log n)-\log^c n}+o(n^{-1}) \\ & =e^{-\Theta(\log^c n)}.
\end{align*}

Finally, let $z$ be a vertex in $V_{R-\eta}\cap\Phi$ for which
  there exists a path $z=z_{R-\eta},\ldots,z_{\ell}$ in 
  $G$ with $z_i\in V_i$ for all $i$.
Note that the angle $\Delta\phi_{z,z_{\ell}}$
  between the endvertices $z$ and $z_{\ell}$ 
  of the path satisfies, by Remark~\ref{rem:monotonTheta}, 
\[
\Delta\phi_{z,z_{\ell}}
  \leq\sum_{i=\ell}^{R-\eta-1}\Delta\phi_{z_i,z_{i+1}}
  \leq\sum_{i=\ell}^{R-\eta-1}\theta_{R}(i-1,i)
  \leq\sum_{i=0}^{R-\ell-1}\theta_{R}(\ell-1+i,\ell+i) = \xi.
\]
Thus, by~\eqref{eqn:phiXi}, the total angle between 
$z$ and $z_{\ell}$ is at most $\frac{1}{3}\phi$.
Since $z$ is a vertex in $\Phi$, it lies within an angle of 
  at most $\frac{\phi}{6}$
  of the bisector of $\Psi$.
  Thus, all vertices of the $z,...,z_{\ell}$ path are
  within a $\phi$-sector with the same bisector as $\Upsilon$ 
  so by construction are also within $\Upsilon$, and hence in 
  establishing that $\calG$ occurs a.a.s.~only 
  $\Upsilon\cap\Phi$ needs to be exposed.
\end{proof}

Now, in order to have a component disconnected from 
  the giant component it is enough that all vertices in $\Upsilon$
  have no neighbors in $B_{O}(R)\setminus\Upsilon$. For vertices in $\Upsilon$ not to have neighbors in
  $(B_{O}(R)\setminus B_{O}(\ell-1))\setminus\Upsilon$,
    by Lemma~\ref{ev:C} Part~\eqref{it:C2}, it is
    enough that $V\cap\Xi\setminus\Upsilon$ is empty, as no vertex in
    $\Upsilon$ can have a neighbor in
    $(B_{O}(R)\setminus B_{O}(\ell-1))\setminus\Xi$. 
However, vertices in $\Upsilon$ could have
  neighbors in $B_{O}(\ell-1)$.
We next deal with this situation.
First, we show that it is unlikely for such neighbors to fall within
  $B_{O}(\ell-1)\setminus B_{O}((1-\frac{\beta}{2})R)$ and then we deal
  with the possibility of having neighbors in 
  $B_{O}((1-\frac{\beta}{2})R)$
  (recall that $\beta=\beta(M)$ is a sufficiently small constant).

Let $\calH$ be the event that no vertex in $B_{O}(\ell-1)\setminus B_{O}((1-\frac{\beta}{2})R)$ is within distance $R$ of $\Upsilon$.
\begin{lemma}\label{ev:H}
There is a constant $C_1=C_1(\alpha)$ depending only on $\alpha$
  so that for sufficiently large $n$ the event $\calH$ occurs with
   probability at least $e^{-C_1\nu Re^{-M}}$.
Moreover, all area exposed in $\calH$ is inside 
  $\Psi\cap B_{O}(\ell-1)\setminus B_{O}((1-\frac{\beta}{2})R)$.
\end{lemma}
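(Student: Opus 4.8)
The plan is to bound, for each angular band, the probability that some vertex of $\Upsilon_{\ell+i}$ has a neighbor in the annulus $B_O(\ell-1)\setminus B_O((1-\tfrac{\beta}{2})R)$, and then to observe that summing the measures of the relevant "dangerous" regions over all bands produces a total measure of order $\tfrac{\nu}{n}R e^{-M}$, so that the probability of seeing \emph{no} such neighbor is at least $e^{-C_1\nu R e^{-M}}$ by the Poisson property. The point is that a vertex $p\in\Upsilon_{\ell+i}$ at radius roughly $\ell+i$ can only connect to a vertex $q$ at radius $\rho\in[(1-\tfrac{\beta}{2})R,\ell-1]$ if $\Delta\phi_{p,q}\le\theta_R(r_p,\rho)$; since $\Upsilon_{\ell+i}$ sits inside a $(2\upsilon_{\ell+i})$-sector with $\upsilon_{\ell+i}=O(\tfrac{\nu}{n}e^{R-\ell})$, the forbidden region at radius $\rho$ is contained in a sector of half-angle $\upsilon_{\ell+i}+\theta_R(\ell+i,\rho)$ around the bisector of $\Psi$.

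First I would fix $i$ and estimate, via Lemma~\ref{lem:aproxAngle} and Corollary~\ref{cor:medidalb}, the measure of the set $\calR_i$ of points $q$ in the annulus $B_O(\ell-1)\setminus B_O((1-\tfrac{\beta}{2})R)$ that are within hyperbolic distance $R$ of \emph{some} point of $\Upsilon_{\ell+i}$. Writing $\rho$ for the radius of $q$, we have $\theta_R(\ell+i,\rho)=(2+o(1))\tfrac{\nu}{n}e^{\frac12(R-(\ell+i)-\rho)}$, which is largest when $\rho$ is smallest, i.e.\ $\rho=(1-\tfrac{\beta}{2})R$; integrating $e^{-\alpha(R-\rho)}\cdot\theta_R(\ell+i,\rho)$ over $\rho$ from $(1-\tfrac{\beta}{2})R$ to $\ell-1$ (and adding the $\upsilon_{\ell+i}$ part, which is negligible by comparison since it multiplies the whole annulus, whose measure is $O(e^{-\alpha\beta R/2})$ and hence superpolynomially small relative to $\tfrac1n$) gives, since the integrand decays geometrically in $\rho$ from the top end $\rho=\ell-1$, a bound of the form $\mu(\calR_i)=O\big(\tfrac{\nu}{n}e^{\frac12(R-2(\ell+i))}e^{(\frac12-\alpha)\cdot(\text{const})}\big)$; the key is that the dominant contribution comes from $\rho$ near $\ell-1$, giving $\mu(\calR_i)=O\big(\tfrac{\nu}{n}e^{(1-\alpha)(R-\ell-i)}\big)$, exactly parallel to the estimate $\mu(\Xi_{\ell+i}\setminus\Upsilon_{\ell+i})$ in Lemma~\ref{ev:C}\eqref{it:C1}. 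Summing the geometric series over $i=0,\ldots,R-\ell$ and using our choice $\ell=R-\tfrac{\log R}{1-\alpha}+\tfrac{M}{1-\alpha}$, so that $e^{(1-\alpha)(R-\ell)}=Re^{-M}$, yields $\mu\big(\bigcup_i\calR_i\big)\le C_1\tfrac{\nu}{n}Re^{-M}$ for a constant $C_1=C_1(\alpha)$.

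Then $\calH$ holds precisely when $V$ places no point in $\bigcup_i\calR_i$ (intersected with the relevant angular window), which by the Poisson property occurs with probability $\ge e^{-n\mu(\bigcup_i\calR_i)}\ge e^{-C_1\nu R e^{-M}}$. For the "area exposed" claim: all the regions $\calR_i$ are contained in the annulus $B_O(\ell-1)\setminus B_O((1-\tfrac{\beta}{2})R)$ and in a sector around the bisector of $\Psi$ of half-angle at most $\max_i(\upsilon_{\ell+i}+\theta_R(\ell+i,(1-\tfrac{\beta}{2})R))$, which I need to check is at most $\tfrac{\psi}{2}=\tfrac12(\nu/n)^{1-\beta}$ so that it lies inside $\Psi$; since $\upsilon_{\ell+i}=O(\tfrac{\nu}{n}e^{R-\ell})=O(\tfrac{\nu}{n}Re^{-M})$ and $\theta_R(\ell+i,(1-\tfrac{\beta}{2})R)=O(\tfrac{\nu}{n}e^{\frac12(R-\ell)+\frac12\cdot\frac{\beta}{2}R})=O(\tfrac{\nu}{n}R^{\frac{1}{2(1-\alpha)}}e^{\beta R/4})$, and $\tfrac{\nu}{n}e^{\beta R/4}=\nu(\nu/n)^{1-\beta/2}\cdot n^{-\beta/4}\ll(\nu/n)^{1-\beta}$ for $\beta$ small, the containment in $\Psi$ follows. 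The main obstacle is making the inner-radius endpoint $(1-\tfrac\beta2)R$ interact cleanly with the geometric sum: one has to confirm that the $\rho=(1-\tfrac{\beta}{2})R$ end of the annulus contributes negligibly (its total $\mu$ there, over the $\theta_R$-wide sector, is $O(\tfrac{\nu}{n}e^{-(\alpha-\frac12)\beta R/2})\cdot e^{-\frac12(\ell+i)}$, superpolynomially smaller than what we need), so that the bound is genuinely governed by the top of the annulus and the dependence on $M$ is the advertised $e^{-M}$, not $e^{-\beta R/2}$. Everything else is a repackaging of the estimates already carried out in Lemmas~\ref{ev:C} and~\ref{ev:G}.
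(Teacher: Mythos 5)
Your overall strategy is the same as the paper's: bound $n$ times the measure of the part of the annulus $B_O(\ell-1)\setminus B_O((1-\frac{\beta}{2})R)$ lying within hyperbolic distance $R$ of the \emph{region} $\Upsilon$, then invoke the Poisson void probability; your treatment of the $\theta_R$-part (geometric decay in $i$, dominance of the outer end $\rho\approx\ell-1$ since $\alpha>\frac12$) and your check that the exposed area stays inside $\Psi$ are both sound. The genuine flaw is the dismissal of the $\upsilon_{\ell+i}$-part. You justify it by asserting that the whole annulus has measure $O(e^{-\alpha\beta R/2})$, "superpolynomially small"; but that is the measure of the \emph{inner disk} $B_O((1-\frac{\beta}{2})R)$, not of the annulus. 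By Lemma~\ref{lem:muBall} (or Corollary~\ref{cor:medidalb}) the annulus has measure $(1+o(1))e^{-\alpha(R-\ell+1)}=\Theta\big((Re^{-M})^{-\alpha/(1-\alpha)}\big)$: it is dominated by its outer radius $\ell-1$ and is only polylogarithmically small. Consequently the $\upsilon$-contribution is not negligible: for a single band it is already $\Theta(\upsilon_{\ell+i}\,e^{-\alpha(R-\ell)})=\Theta(\frac{\nu}{n}Re^{-M})$, i.e.\ of the same order as your entire allowed total, and if you sum it over the $R-\ell=\Theta(\log R)$ bands of $\Upsilon$, as your band-by-band accounting of $\mu(\calR_i)$ does, you only get $\mu(\bigcup_i\calR_i)=O(\frac{\nu}{n}Re^{-M}\log R)$ and hence $\Pr(\calH)\ge e^{-C\nu R\log R\,e^{-M}}$. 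That neither matches the statement of the lemma nor suffices downstream: in~\eqref{prec} and~\eqref{eq:Prec} one needs a bound of the form $e^{-c_MR}$ with $c_M$ arbitrarily small, and an extra $\log R$ in the exponent makes the expected number of good $\psi$-sectors tend to $0$.

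The repair is exactly what the paper's proof does. The $\upsilon$-parts of all the $\calR_i$ are nested inside one fixed sector (every $\upsilon_{\ell+i}\le\upsilon_R\le\phi$ by~\eqref{eqn:phiXi}), so their union contributes only once, at most $2\phi\,n\,\mu(B_O(\ell-1))=O(\nu e^{(1-\alpha)(R-\ell)})=O(\nu Re^{-M})$; equivalently, parametrize by bands $j$ of the annulus and bound the angular half-width at band $j$ by $\upsilon_R+\theta_R(j,\ell-1)$, which is the paper's displayed estimate. With that single correction your argument goes through and essentially coincides with the paper's. Two minor further points: $\calH$ holds \emph{if}, not "precisely when", no point of $V$ falls in the dangerous region (that implication is the one you need, and it is also what makes the exposed area lie only in the annulus), and the angle bound should use $\theta_R(\ell+i-1,\rho)$ (monotonicity from Remark~\ref{rem:monotonTheta}) rather than $\theta_R(\ell+i,\rho)$, which only costs a constant factor.
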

\begin{proof}
Since by definition $\upsilon_{\ell+i}$ increases with $i$, 
  all points in $\Upsilon$ are within an angle 
  $2\upsilon_{R}= 2(\frac{\phi}{2}+\xi)$, so recalling~\eqref{eqn:phiXi}
  also within an angle $2\phi$.
Moreover, by Remark~\ref{rem:monotonTheta}, 
  between two points within distance at most $R$ one of which is 
  in $B_O(j+1)\setminus B_{O}(j)$, $(1-\frac{\beta}{2})R\leq j\leq\ell-2$,
  and the other one in $\Upsilon$ there is an angle at the origin
  of at most $\theta_R(j,\ell-1)$.
Hence, by Lemma~\ref{lem:aproxAngle} and Lemma~\ref{lem:muBall}, and again
  by our choices for $\phi$ and $\ell$, 
  the expected number of neighbors of the vertices in $\Upsilon$ 
  that are inside $B_{O}(\ell-1)\setminus B_{O}((1-\frac{\beta}{2})R)$
  is at most

\begin{align*}
&
n\sum_{j=(1-\frac{\beta}{2})R}^{\ell-2}2(\upsilon_R+\theta_{R}(j,\ell-1))\mu(B_{O}(j+1)\setminus B_{O}(j)) \\
&\qquad \leq 
  2\phi n\mu(B_{O}(\ell))+ 2\sum_{j=(1-\frac{\beta}{2})R}^{\ell-2}\theta_{R}(j,\ell-1)n\mu(B_{O}(j+1))
\\
&\qquad \leq
  18(2+o(1))\nu e^{(1-\alpha)(R-\ell)} + 
  2(2e^{3/2-\alpha}+o(1))\nu e^{\frac12(R-\ell)}\sum_{k\geq R-\ell}e^{-(\alpha-\frac12)k}
  \\
&\qquad \leq 
  C_1\nu Re^{-M},
\end{align*}
where $C_1$ is a constant depending on $\alpha$, but independent of $M$.
The lower bound on $\PP(\calH)$ immediately follows.

To conclude, observe that 
  all area exposed in $\calH$ is inside the $\psi$-sector $\Psi$, as 
  all area exposed lies within an angle of 
  at most $2(\upsilon_{R}+\theta_{R}(\ell-1,(1-\frac{\beta}{2})R))$,
  which by the preceding discussion, Lemma~\ref{lem:aproxAngle}, 
  and our choices of $\psi$, $\phi$, and $\ell$, is at most 
\[
2\phi + 2(2\sqrt{e}+o(1))e^{\frac12(R-\ell-(1-\frac{\beta}{2})R)}
  = \Big(\frac{\nu}{n}\Big)^{1-\frac{\beta}{2}+o(1)}=o(\psi).
\]
\end{proof}
 
If for a sector $\Psi$ the events $\calB, \calC, 
  \calG, \calH$ hold, then we have found a 
  \emph{precomponent} of size $\Theta((\log n)^{\frac{1}{1-\alpha}})$: by 
  $\calB$ and $\calG$, there is a collection of vertices in $\Upsilon$ connected to each other
  (but perhaps not separated from the giant component) of size  $\Theta((\log n)^{\frac{1}{1-\alpha}})$.
All events are independent or positively correlated: $\calB$ and $\calG$ only depend on what 
  happens inside $\Upsilon$, and $\calC$ and $\calH$ depend on 
  what happens in disjoint regions outside $\Upsilon$, so
  $\calB\cap\calG$, $\calC$, and $\calH$ are independent. Moreover, events $\calB$ and $\calG$ are positively correlated.
Hence, by combining Lemmata~\ref{ev:B}, \ref{ev:C}, \ref{ev:G} and \ref{ev:H}
we get 
\begin{equation}\label{prec}
\Pr{(\calB\cap\calC\cap\calG\cap\calH)} 
  \ge  (1+o(1))e^{-C_0\nu Re^{-M}}e^{-C_1\nu Re^{-M}}
  = e^{-c_M R}
\end{equation}
for some constant $c_M=c_M(\alpha,\nu) > 0$ that can be made as small as 
  desired by choosing $M$ sufficiently large. 
Hence, for a given sector $\Psi$, the probability to have a precomponent 
  of size $\Theta((\log n)^{\frac{1}{1-\alpha}})$ is at least $e^{-c_M R}$, 
  independent of $\beta$. 
Observe also that all events $\calB$, $\calC$, $\calG$, 
  $\cal{H}$ expose only areas inside 
  $\Psi\setminus B_{O}((1-\frac{\beta}{2})R)$,  and thus the 
  events corresponding to the existence of a precomponent in disjoint
  $\psi$-sectors are independent.

Now, consider the partition of $B_O(R)$ into $\psi$-sectors 
  $\Psi_1,\ldots, \Psi_{2\pi/\psi}$. 
  By~\eqref{prec}, the probability that there is no $\psi_i$ with a 
  precomponent is therefore at most 
\begin{equation}\label{eq:Prec}
(1-e^{-c_M R})^{n^{1-\beta+o(1)}} \le e^{-n^{1-\beta-2c_M+o(1)}},
\end{equation}
which tends to $0$ faster than the inverse of any fixed polynomial in $n$, if $c_M$ is chosen small enough so that $1-\beta-2c_M > 0$ (such a choice exists, since $c_M$ is independent of $\beta$). Hence, w.e.h.p.~there exists a $\psi$-sector $\Psi$ that contains a precomponent of size 
  $\Theta((\log n)^{\frac{1}{1-\alpha}})$.

Let $\calS$ be the event that a randomly chosen $\psi$-sector $\Psi$ 
  is such that there is no vertex in $B_O((1-\frac{\beta}{2})R)$ at distance
  $R$ from $\Upsilon(\Psi)$.
\begin{lemma}\label{ev:P}
The event $\calS$ holds a.a.s.
\end{lemma}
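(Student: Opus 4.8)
The plan is to fix a $\psi$-sector $\Psi$, bound the expected number
\[
\Ex[N(\Psi)] \quad\text{where } N(\Psi):=\#\{(p,q):\ p\in V\cap\Upsilon(\Psi),\ r_q\le(1-\tfrac\beta2)R,\ pq\in E\},
\]
show $\Ex[N(\Psi)]=o(1)$ with a bound \emph{uniform in} $\Psi$, and then conclude by Markov's inequality together with averaging over the uniformly chosen sector. Using the intensity of the Poisson point process ($n\,f(\cdot)$) and the fact that $\Upsilon(\Psi)$ and $B_O((1-\tfrac\beta2)R)$ are disjoint (as $\ell-1=R-O(\log\log n)\gg(1-\tfrac\beta2)R$),
\[
\Ex[N(\Psi)]=n^2\int_{\Upsilon(\Psi)}\mu\big(B_p(R)\cap B_O((1-\tfrac\beta2)R)\big)f(p)\,dp
\;\le\; n^2\,\mu(\Upsilon(\Psi))\,\sup_{r_p\ge\ell-1}\mu\big(B_p(R)\cap B_O((1-\tfrac\beta2)R)\big).
\]
For the first factor, since by~\eqref{eqn:phiXi} the region $\Upsilon(\Psi)$ lies in an angle $2\upsilon_R<2\phi$ and between radii $\ell-1$ and $R$, one gets $\mu(\Upsilon(\Psi))=O(\phi)=O\!\big(\tfrac\nu n R^{1/(1-\alpha)}\big)$ from~\eqref{eqn:phi}.

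For the supremum, fix $p$ with $r_p\ge\ell-1$ and split $B_O((1-\tfrac\beta2)R)$ at radius $R-r_p$. For $q$ with $r_q\le R-r_p$ we have $\dist(p,q)\le r_p+r_q\le R$ automatically, so that part contributes at most $\mu(B_O(R-r_p))\le\mu(B_O(R-\ell+1))=O\!\big((\nu/n)^{2\alpha}R^{\alpha/(1-\alpha)}\big)$ by Lemma~\ref{lem:muBall} and our choice of $\ell$ (constants here may depend on $M,\alpha,\nu$). For $q$ at radius $\rho\in[R-r_p,(1-\tfrac\beta2)R]$ the set of admissible angular offsets from $p$ has width $2\theta_R(r_p,\rho)=O(e^{(R-r_p-\rho)/2})$ by Lemma~\ref{lem:aproxAngle} (applicable since $\rho+r_p\ge R$), and $f(\rho)=O(e^{-\alpha(R-\rho)})$, so this part contributes at most a constant times
\[
\int_{R-r_p}^{(1-\frac\beta2)R}e^{\frac12(R-r_p-\rho)}e^{-\alpha(R-\rho)}\,d\rho
=O\!\Big(e^{\frac12(R-r_p)-\alpha R}\int^{(1-\frac\beta2)R}e^{(\alpha-\frac12)\rho}\,d\rho\Big).
\]
Here the hypothesis $\alpha>\tfrac12$ is used to see that $e^{(\alpha-1/2)\rho}$ is increasing, so the integral is governed by its outer endpoint $\rho=(1-\tfrac\beta2)R$; the exponent then simplifies to $-\alpha R+(\alpha-\tfrac12)(1-\tfrac\beta2)R=-\tfrac R2-\tfrac{(2\alpha-1)\beta}{4}R$, and with $e^{(R-r_p)/2}=O(R^{1/(2(1-\alpha))})$ (from $r_p\ge\ell-1$), $e^{-R/2}=\nu/n$ and $e^{-\frac{(2\alpha-1)\beta}{4}R}=(\nu/n)^{(2\alpha-1)\beta/2}$ we obtain that this part is $O\!\big(R^{1/(2(1-\alpha))}(\nu/n)^{1+(2\alpha-1)\beta/2}\big)$.

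Substituting the two estimates and $\mu(\Upsilon(\Psi))=O(\tfrac\nu n R^{1/(1-\alpha)})$ into the displayed bound and recalling $R=2\log(n/\nu)=\Theta(\log n)$, the automatic-adjacency part contributes $O(n^{1-2\alpha}\,\mathrm{polylog}\,n)=o(1)$ because $\alpha>\tfrac12$, and the second part contributes $O\!\big(n^{-(2\alpha-1)\beta/2}\,\mathrm{polylog}\,n\big)=o(1)$ because $\alpha>\tfrac12$ and $\beta>0$. Hence $\Ex[N(\Psi)]=o(1)$ uniformly in $\Psi$, so $\Pr(N(\Psi)\ge1)=o(1)$ for every $\psi$-sector, and averaging over the uniformly chosen $\Psi$ gives $\Pr(\calS^{c})=o(1)$, i.e.\ $\calS$ holds a.a.s. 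The only delicate step is the estimate of $\mu(B_p(R)\cap B_O((1-\tfrac\beta2)R))$: one must verify that the radial integral is dominated by its outer endpoint (this is exactly where $\alpha>\tfrac12$ is needed) and then check that, after multiplying by $n^2\mu(\Upsilon)=O(n\,\mathrm{polylog}\,n)$, the surviving power of $n$ is strictly negative — the required ``budget'' being supplied partly by $\alpha-\tfrac12>0$ and partly by $\beta>0$.
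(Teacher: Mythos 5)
Your proof is correct and follows essentially the same route as the paper: a first-moment bound plus Markov's inequality, with the same radial estimate (the integral over the neighbor's radius dominated by the outer boundary $(1-\frac{\beta}{2})R$, the gain $n^{-\Theta(\beta(\alpha-\frac12))}$ coming from $\alpha>\frac12$ and $\beta>0$). The only difference is bookkeeping: you count expected bad \emph{pairs} via the Mecke formula (splitting off the inner ball $B_O(R-r_p)$ where adjacency is automatic), while the paper counts the expected number of vertices in a deterministic ``dangerous'' region $\calR\subseteq B_O((1-\frac{\beta}{2})R)$ obtained from the worst case $r_p\ge\ell-1$ and the angular width $2\phi$ of $\Upsilon$; the resulting estimates coincide.
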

\begin{proof}
By Remark~\ref{rem:monotonTheta}, 
  points in $B_{O}(j)\setminus B_O(j-1)$, $j\leq (1-\frac{\beta}{2})R$, at 
  distance at most $R$ from some point in $\Upsilon=\Upsilon(\Psi)$ 
  lie in a sector of angle at most 
  $2\theta_{R}(\ell-1,j-1)=2(2e+o(1))e^{\frac12(R-\ell-j)}$. Also, as observed at the beginning of the proof of Lemma~\ref{ev:H}, points inside $\Upsilon$ are within an angle of $2\phi$.
Hence, the region $\calR\subseteq
  B_{O}((1-\frac{\beta}{2})R)$ that needs to be empty in order for $\calS$ to 
  hold satisfies
\begin{align*}
\mu(\calR) & 
  = 2(2e+o(1))\sum_{j=0}^{(1-\frac{\beta}{2})R}\big(e^{\frac12(R-\ell-j)}+2\phi\big)e^{-\alpha(R-j)} \\
  & =O(R^{\frac{1}{1-\alpha}}e^{-\alpha R+(\alpha-\frac12)(1-\frac{\beta}{2})R})
  =n^{-1-\beta(\alpha-\frac12)+o(1)}.
  \end{align*}
Thus, the expected number of vertices inside $\calR$ is $o(1)$, and by 
  Markov's inequality, the event $\calS$ holds a.a.s. 
\end{proof}
To prove Theorem~\ref{thm:main}, observe now that if in addition to the existence of a precomponent the event $\calS$ holds, then the precomponent inside the randomly chosen $\psi$-sector $\Psi$ forms a connected component separated from the giant component. Since by~\eqref{eq:Prec} w.e.h.p.~there is a precomponent, by Lemma~\ref{ev:P}, by a union bound,  a.a.s.~there exists a component of size $\Theta((\log n)^{\frac{1}{1-\alpha}})$.
Summarizing, we have established the following:
\begin{proposition}\label{p:LowerBound}
For $\frac12 < \alpha < 1$, a.a.s.~$L_2(G)=\Theta((\log n)^{\frac{1}{1-\alpha}})$.
\end{proposition}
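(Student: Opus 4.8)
The plan is to obtain the two matching bounds separately. The upper bound $L_2(G)=O((\log n)^{1/(1-\alpha)})$ is precisely Proposition~\ref{p:UpperBound}, so nothing further is needed there and the work lies entirely in the lower bound $L_2(G)=\Omega((\log n)^{1/(1-\alpha)})$. For the latter I would partition $B_O(R)$ into the $\psi$-sectors with $\psi=(\nu/n)^{1-\beta}$ and, inside a fixed sector $\Psi$, work with the nested regions $\Upsilon\subseteq\Xi\subseteq\Psi$ constructed above. The goal is to show that with probability at least $e^{-c_M R}$ — a bound independent of $\beta$, with $c_M=c_M(\alpha,\nu)$ arbitrarily small once $M$ is large — the sector $\Psi$ contains a \emph{precomponent}: a set of $\Theta((\log n)^{1/(1-\alpha)})$ vertices of $V\cap\Upsilon$ that are mutually connected in $G$ but that a priori might still reach the giant.

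The precomponent is produced by intersecting four events. The event $\calB$ (Lemma~\ref{ev:B}) guarantees that each of the $18$ subsectors of $\Upsilon_\ell$ is occupied, hence $\layer_\ell\cap\Upsilon_\ell$ forms a single clique-like connected piece. The event $\calG$ (Lemma~\ref{ev:G}) supplies, inside the narrow $\tfrac{\phi}{3}$-sector, enough downward layer-by-layer chains from $\layer_{R-\eta}$ down to $\layer_\ell$ to make the component $C(\Upsilon)$ have size $\Theta((\log n)^{1/(1-\alpha)})$. The event $\calC$ (Lemma~\ref{ev:C}) empties the two components of $\Xi\setminus\Upsilon$ — the ``walls'' — so that, by Lemma~\ref{ev:C}\eqref{it:C2}, no point of a layer of index $\ge\ell-1$ lying outside $\Xi$ can be adjacent to a point of $\Upsilon$. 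Finally $\calH$ (Lemma~\ref{ev:H}) forbids neighbours of $\Upsilon$ in the annulus $B_O(\ell-1)\setminus B_O((1-\tfrac{\beta}{2})R)$. The crucial structural point, which I would make explicit, is that these four events expose only pairwise disjoint regions of $\Psi\setminus B_O((1-\tfrac{\beta}{2})R)$, so they are independent (with $\calG$ positively correlated with $\calB$), which yields the product estimate~\eqref{prec}.

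Since each precomponent-existence event exposes area only inside its own sector minus the inner ball $B_O((1-\tfrac{\beta}{2})R)$, these events are independent across the $n^{1-\beta+o(1)}$ sectors, so a union bound as in~\eqref{eq:Prec} shows that w.e.p.\ at least one $\psi$-sector carries a precomponent, provided the constants are fixed in the right order: $M$ first (large, depending on $\alpha,\nu$ only, so that $c_M$ is small), then $\beta$ small enough that $1-\beta-c_M>0$. To upgrade a precomponent to a genuine second component one still must exclude neighbours of $\Upsilon$ deep inside $B_O((1-\tfrac{\beta}{2})R)$; this is the event $\calS$, which holds a.a.s.\ by Lemma~\ref{ev:P} because the region forced to be empty has measure $n^{-1-\beta(\alpha-1/2)+o(1)}=o(1/n)$. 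Combining ``some $\psi$-sector has a precomponent'' (w.e.p.) with $\calS$ (a.a.s.) via a union bound gives a component of size $\Theta((\log n)^{1/(1-\alpha)})$ disconnected from the giant, and together with Proposition~\ref{p:UpperBound} this yields $L_2(G)=\Theta((\log n)^{1/(1-\alpha)})$, proving Proposition~\ref{p:LowerBound} (and hence the $\frac12<\alpha<1$ part of Theorem~\ref{thm:main}).

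The main obstacle is not any single estimate but the bookkeeping: one must carefully track which event exposes which portion of the disk so that the independence/positive-correlation claims are legitimate and the per-sector probability takes the clean form $e^{-c_M R}$ with $c_M$ governed by $M$ alone, and one must respect the subtle order of quantifiers among $M$, $\beta$, $c_M$ that makes the union bound in~\eqref{eq:Prec} beat every fixed polynomial in $n$. The conceptual heart is the ``walls'' statement Lemma~\ref{ev:C}\eqref{it:C2} together with the adjacency criterion $\dist(p,p')\le R \iff \Delta\phi_{p,p'}\le\theta_R(r_p,r_{p'})$; once these are in hand, the remaining bounds are routine invocations of Lemma~\ref{lem:aproxAngle}, Corollary~\ref{cor:medidalb}, and Lemma~\ref{lem:Poisson}.
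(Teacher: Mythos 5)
Your proposal is correct and follows essentially the same route as the paper: the upper bound is delegated to Proposition~\ref{p:UpperBound}, and the lower bound is obtained by combining the events $\calB$, $\calC$, $\calG$, $\calH$ within a $\psi$-sector to get a precomponent with probability $e^{-c_M R}$ (with the same order of quantifiers for $M$, $\beta$, $c_M$), using independence across sectors as in~\eqref{eq:Prec}, and then removing potential neighbours deep inside $B_O((1-\frac{\beta}{2})R)$ via the event $\calS$ of Lemma~\ref{ev:P}. This is exactly the paper's argument, so nothing further is needed.
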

In fact, we have established that for some sufficiently small $\beta'>0$ a.a.s.~there are $\Omega(n^{\beta'})$ components of size $\Theta((\log n)^{\frac{1}{1-\alpha}})$: indeed, the partition of $B_O(R)$ into $\psi$-sectors can be grouped into groups of sectors making for a total angle of $n^{-\beta''}$, where $\beta'' > 0$ is chosen small enough so that~\eqref{eq:Prec} holds in each group, and also small enough, so that a union bound of all events over all groups still holds as well.

Proposition~\ref{p:UpperBound}, Proposition~\ref{p:LowerBound}, and the argument of the previous paragraph yield Theorem~\ref{thm:main}.
  

\section{Boundary cases of $\alpha$}\label{sec:boundary}
As noted in the introduction, for the hyperbolic random graph model, 
  the interesting range of the parameter is when $\frac12\leq\alpha\leq 1$.
In this section we investigate the size of the second largest component
  when $\alpha$ takes the values $\frac12$ or $1$.

\subsection{Case $\alpha=\frac12$}
By~\cite{BFM13b}, for $\alpha=\frac12$, it is known that for $\nu \ge \pi$, with probability tending to $1$,
the random graph $G$ is connected, whereas for smaller values of $\nu$, the probability of being connected is a continuous function of $\nu$ tending to $0$ as $\nu \to 0$.

On the one hand, for any constant $\nu$, there exists a constant $C$ (with $C$ being large as $\nu$ being small) so that a.a.s.~each vertex $v\in B_{O}(R-C)$ belongs to the giant component: 
indeed, for a vertex $v\in B_{O}(i)\setminus B_{O}(i-1)$ with 
  $\frac{R}{2} < i \le R-C$, the expected number of neighbors of 
  $v$ that belong to $B_{O}(j)\setminus B_{O}(j-1)$ with say $j>\frac{R}{2}$
  is $\Theta(e^{\frac12(R-i-j)}ne^{-\frac12(R-j)})=\Omega(1)$, where the constant 
  can be made large by making $C$ large. 
Hence, the probability that $v$ does not find 
  a neighbor in 
  $B_{O}(\frac{5R}{6})\setminus B_{O}(\frac{4R}{5})$ is $e^{-\Omega(R)}$, 
  where the constant in 
  the exponent can be made large by choosing $C$ large. 
By a similar argument, a.a.s.~every vertex in 
$B_{O}(\frac{5R}{6})\setminus B_{O}(\frac{4R}{5})$ 
  also has a neighbor in 
$B_{O}(\frac{R}{2})\setminus B_{O}(\frac{R}{4})$. 
Since all vertices within $B_{O}(\frac{R}{2})$ form a clique, all vertices 
  in $B_{O}(R-C)$ thus form a component of linear size.
Now, by choosing a sector $\Phi$ of angle $C' \log n /n$ with $C'=C'(C)$ sufficiently large, by standard estimates for Poisson random variables, each such sector will a.a.s.~contain a vertex in $B_{O}(R-C)$. Hence, a.a.s.~the second component 
  has to be contained in at most two consecutive sectors, as otherwise, by Lemma~\ref{lem:FK15}, any path whose vertices are all in $B_O(R)\setminus B_O(R-C)$ spanning two sectors, as well as the component to which such path belongs, would necessarily also have to be connected to a vertex of the giant component. 
Since the number of vertices in each sector of angle $C'\log n/n$ is a.a.s.~$O(\log n)$, this upper 
  bound holds also for the size of the second component.

On the other hand, for $\nu$ sufficiently small, we now show that with constant probability there exists a sector $\Phi$ of $B_O(R)$ of angle $\varepsilon \log n/n$ with $\varepsilon=\varepsilon(C)$ sufficiently small so that the following three events hold:
\begin{enumerate}[(i).-]
\item\label{it:boundary1} inside $\Phi$ there is no vertex $v$ in $B_{O}(R-C)$,
\item\label{it:boundary2} there exists a path of length $\varepsilon' \log n$ ($\varepsilon'$ sufficiently small) with all vertices being in $\varepsilon' \log n$ consecutive subsectors of $\Phi$ of angle $\varepsilon''/n$ 
  (with $\varepsilon''=\varepsilon''(C)$ small enough), 
  with all but the first and last vertex belonging to 
  $B_{O}(R-C_1+1)\setminus B_{O}(R-C_1)$ while the first and last belong 
  to $B_{O}(R-C_1)\setminus B_{O}(R-C_1-1)$ (for $C_1$ a small constant in comparison to $C$, but not too small so that any two vertices in consecutive 
  subsectors are adjacent; clearly, if a smaller value of $C_1$ is needed below, then this can be achieved 
  by making $\varepsilon''$ smaller), 
  except for this first and last vertex in all 
  these $\varepsilon' \log n$ subsectors there is no vertex in $B_{O}(R-C_1) \setminus B_{O}(R-C)$, 
  and there is no other vertex inside $B_{O}(R) \setminus B_{O}(R-C_1)$
  in the subsector of the first 
  and the last vertex,
\item\label{it:boundary3} no vertex of the path is connected to the giant component.
\end{enumerate} 
Note that for a fixed sector $\Phi$ condition~\eqref{it:boundary1} is satisfied with
  probability $e^{-\Theta(R)}$ with the constant in the exponent small
  for $\varepsilon$ small. 
  Condition~\eqref{it:boundary2}
  also holds with probability $e^{-\Theta(R)}$ with 
  the constant small for $\varepsilon'$ small. 
The last condition is satisfied if the 
  leftmost and rightmost vertex of the path do not connect to the giant 
  component: indeed, if a vertex outside the $\varepsilon' \log n$ subsectors containing the path is connected by an edge to a vertex that is neither the first nor the last vertex of the path, then by Lemma~\ref{lem:FK15} with $p'$ being the first (last) vertex on the path, $p''$ being another vertex on the path, and $p$ being the vertex outside, it must hold that $p$ is also connected by an edge to the first (last, respectively) vertex of the path. If a vertex inside $\Phi$ is connected by an edge to the giant component, then the vertex must be inside $B_{O}(R) \setminus B_{O}(R-C_1)$, since first there are no vertices in $\Phi\setminus B_{O}(R-C)$, and second, in all subsectors between the first and last vertex of the path there are no vertices in $B_{O}(R-C_1)$. Since any set of vertices connected to the giant component has to leave $\Phi$, once more by Lemma~\ref{lem:FK15},  at least one vertex of this set also has to be connected by an edge to the first (last, respectively) vertex of the path.
Hence, condition~\eqref{it:boundary3} again happens with probability $e^{-\Theta(R)}$ 
  (again with a constant in the exponent that can be made small for $\nu$ sufficiently small and $C_1$ still relatively small). 
The events described in the first two conditions are independent, and their intersection is positively correlated with the event described by the third condition.
Thus the expected number of sectors $\Phi$ for which all conditions hold 
  is $ne^{-\Theta(R)}/\log n=\omega(1)$ for 
  $\varepsilon, \varepsilon', \varepsilon''$ 
  sufficiently small. 
A second moment method analogous to the one in Lemma~\ref{ev:G} shows that different
  sectors are "almost" independent (special care is taken of
  vertices close to the center, that is, at a large constant distance, as in Lemma~\ref{ev:H} and Lemma~\ref{ev:P}). 
Thus, with constant probability such a sector exists (since there is constant probability that there is no vertex close to the center), and the second largest
  component is of size $\Omega(\log n)$, and 
  thus we obtain Proposition~\ref{p:alphaHalf}.

\subsection{Case $\alpha=1$}
Again by~\cite{BFM15}, for $\alpha=1$, 
  for $\nu$ sufficiently large, a.a.s.~there exists a giant component,
  whereas for $\nu$ small enough, a.a.s.~the largest component is sublinear. 
Choose $\ell:=\lambda R$ for some $0.51 < \lambda < 1$ ($\lambda$ depends on $\nu$ and has to be chosen closer to $1$ for $\nu$ larger) and consider 
  a vertex $v_1$ in $(B_{O}(\ell)\setminus B_{O}(\ell-1)) \cap \Upsilon_1$, where $\Upsilon_1$ is a sector containing all vertices $u$ with $\theta_u \in [0, \pm Cn^{-(\lambda-0.51)})$ for some large constant $C > 0$ (there are w.e.h.p.~$\Theta(ne^{-(R-\ell)}n^{-(\lambda-0.51)})=\Theta(n^{\lambda-0.49})$ such vertices, and hence w.e.h.p.~we find such a vertex $v_1$).  Clearly, the component of $v_1$ is at least the degree of $v_1$, which is w.e.h.p.~$\Theta(n^{1-\lambda})$. We will show that a.a.s.~there are polynomially many sectors like $\Upsilon_1$ containing a vertex of degree $\Theta(n^{1-\lambda})$ having all vertices of its component inside a sector whose angle is three times the angle of $\Upsilon_1$. 
    
    First, by standard estimates for Poisson random variables, a.a.s.~there is no vertex in $B_{O}(0.49R)$.
    Now, we try to construct a \emph{staircase} around the component of $v_1$ (a curve essentially like the boundary of the hatched region of Figure~\ref{fig:Xi} but with $\phi=0$). The \emph{left border} of the staircase has the following anchor points:
    $(\theta^1_{\ell-1}, r_{\ell-1})$, $(\theta^2_{\ell-1}, r_{\ell-1})$, where $\theta^1_{\ell-1}=\theta_{v_1}$, $r_{\ell-1}=\ell-1$ 
    and $\theta^2_{\ell-1}$ is chosen so that the point  $(\theta^2_{\ell-1}, r_{\ell})$ with $r_{\ell}=\ell$ is exactly at hyperbolic distance $R$ from $(\theta^1_{\ell-1}, r_{\ell-1})$ (and to the left of $v_1$, that is, its angular coordinate precedes in counterclockwise order $\theta_{v_1}$). Then, iteratively having found the two anchor points $(\theta^1_{\ell'}, r_{\ell'})$, $(\theta^2_{\ell'}, r_{\ell'})$ for some $\ell-1 \le \ell' \le R-2$, define the new anchor points corresponding to layer $\ell'+1$ as $(\theta^1_{\ell'+1}, r_{\ell'+1})$ and $(\theta^2_{\ell'+1}, r_{\ell'+1})$ with $\theta^1_{\ell'+1} = \theta^2_{\ell'}$, $r_{\ell'+1}=\ell'+1$ and $\theta^2_{\ell'+1}$ chosen so that the point $(\theta^2_{\ell'+1}, r_{\ell'+1})$ is exactly at hyperbolic distance $R$ from $(\theta^2_{\ell'}, r_{\ell'})$ (and to the left of it). For each anchor point $p=(\theta^j_{\ell'}, r_{\ell'})$ with $j \in \{1,2\}$, the expected number of vertices in $B_{p}(R) \cap B_O(\ell'+1) \setminus B_O(0.49 R)$ is again  $\Theta(\sum_{i=0.49R}^{\ell'+1}ne^{-(R-i)}e^{\frac12(R-\ell'-i)})=\Theta(1)$, with the constant hidden in the $\Theta(\cdot)$ notation proportional to $\nu$. The events of having no vertex in the mentioned neighborhoods of all anchor points are not independent, but they are positively correlated (conditional under having some regions empty, this only helps to have other regions empty). Hence, given that there are $\Theta(R)$ anchor points, the probability to have all desired regions empty (including the one of $v_1$) is at least $e^{-\Theta(R)}=n^{-\gamma}$, where $\gamma > 0$ can be made small by choosing $\lambda$ close to $1$. Define the \emph{right border} of the staircase of $v_1$ in the same way, and the probability that  all anchor points on both borders have their corresponding regions empty is at least $n^{-2\gamma}$. Moreover, the angle exposed by all these regions (outside $B_O(0.49R)$) is $\Theta\big( \sum_{\ell'=\ell-1}^{R-1} (e^{\frac12(R-\ell'-0.49R)}+e^{\frac12(R-2\ell')})\big)=\Theta(e^{\frac12(R-\ell-0.49R)})=\Theta(n^{0.51-\lambda})$, and for $C$ sufficiently large, all exposed area is inside a sector whose bisector is $\theta_{v_1}$ and whose angle is twice the angle of $\Upsilon_1$.

Next, partition $B_O(R)$ into $N=\Theta(n^{\lambda-0.51})$ sectors $\Xi_1,...,\Xi_N$ each of angle $3Cn^{0.51-\lambda}$ for some $C$ sufficiently large (the same $C$ as before, note that the angle of $\Xi_i$ is three times the angle of $\Upsilon_1$). Applying the above argument to the middle subsector $\Upsilon_i$ of the three subsectors of angle $Cn^{0.51-\lambda}$ of each sector $\Xi_i$, and noting that for $\lambda$ sufficiently close to $1$, we have $2\gamma < \lambda-0.51$, w.e.h.p. we find some $1 \le i \le N$ such that the corresponding middle subsector $\Upsilon_i$ is such that all desired regions corresponding to anchor points of the staircase around the starting vertex $v_i$ of $\Upsilon_i$ are empty. In that case, we claim that there is no edge crossing the (vertical or horizontal lines of the) staircase, and hence the component of $v_i$ is restricted to $\Xi_i$: indeed, suppose that there is a vertex $u$ in $B_O(\ell') \setminus B_O(\ell'-1)$ "below" the staircase ("below" refers to the following area: connect all anchor points starting from $(\theta^1_{\ell-1}, r_{\ell-1})$ both to the left and to the right by artificial lines in staircase manner, and the last one at radial distance $R-1$ via a straight line to the boundary; this divides $B_O(R)$ into two connected pieces, and "below" refers to the piece not containing the origin, and "above" to the piece containing the origin) that is connected by an edge to a vertex $w$ "above" the staircase.
Suppose first that $w$ is such that $\theta_w$ is within the smaller angle formed by the leftmost and rightmost anchor point of the staircase, and suppose w.l.o.g.~that $\theta_w$ is between the leftmost anchor point and $\theta_v$.
If $\theta_w$ is between $\theta^2_{\ell''}$ and $\theta^1_{\ell''}$, then $p=(\theta^2_{\ell''}, r_{\ell''})$
is, by Lemma~\ref{lem:FK15} applied with $p'=w$ and $p''=(\theta^1_{\ell''}, r_{\ell''})$  also at distance at most $R$ from $w$ (note that for any $\ell''$, the points $p$ and $p''$ are at distance less than $R$ by monotonicity of $\cosh$), which contradicts having the desired region empty. Otherwise, if $w$ is such that $\theta_w$ is not within the smaller angle formed by the leftmost and rightmost anchor point of the staircase, suppose w.l.o.g.~that $\theta_w$ is to the left of the leftmost anchor point of the staircase. Then, for
$r_w \in (r_{\ell''}, r_{\ell''+1})$ with $\ell'' < \ell'$,
the anchor point $(\theta^2_{\ell'-1}, r_{\ell'-1})$ is once again by Lemma~\ref{lem:FK15} also at distance at most $R$ from $w$, contradicting our assumption of having the desired region empty. If $\ell'' \ge \ell'$,
then we arrive at a contradiction:
on the one hand, by Lemma~\ref{lem:FK15} with $p'=(\theta^2_{\ell'-1}, r_{\ell'-1})$, $p''=u$ and $p=w$, the distance between $p'$ and $w$ is at most $R$. On the other hand,  $(\theta^2_{\ell'}, r_{\ell'})$ is at hyperbolic distance exactly $R$ from $(\theta^2_{\ell'-1}, r_{\ell'-1})$, and $w$ is in angular distance further away from $(\theta^2_{\ell'-1}, r_{\ell'-1})$ than $(\theta^2_{\ell'}, r_{\ell'})$ and it has also a strictly bigger radial coordinate than $r_{\ell'}$. Thus, by strict monotonicity of $\cosh$ (see Remark~\ref{rem:monotonTheta}) its hyperbolic distance is bigger than $R$, hence contradiction.
It follows that the component of $v_i$ is  inside a sector whose bisector is $\theta_{v_i}$ and whose angle is twice the angle of $\Upsilon_i$, and hence the component is inside $\Xi_i$.

Since in fact not only one, but w.e.h.p.~polynomially many such sectors $\Upsilon_i$ can be found, the argument shows that w.e.h.p.~polynomially many polynomial-size components exist (of size $\Omega(n^{\delta})$ for some $\delta > 0$), thus establishing Proposition~\ref{p:alphaOne}. Determining the exponent of the size of the second largest component remains open.

\section{Final remarks}
For $\frac12<\alpha<1$, the proof argument put forth 
  in this article does not seem 
  strong enough to be able to pinpoint the constant accompanying the
  $(\log n)^{\frac{1}{1-\alpha}}$ term in the asymptotic expression
  derived for $L_2(G)$ in Theorem~\ref{thm:main}.
We believe that developing techniques that would allow to do so
  is a worthwhile and interesting endeavor.



\bibliographystyle{alpha}
\bibliography{biblio} 

\end{document}
